\titleformat*{\section}{\large\bfseries}
\newtheorem{theorem}{Theorem}[section]
\newtheorem{lemma}[theorem]{Lemma}
\newtheorem{corollary}[theorem]{Corollary}
\newtheorem{proposition}[theorem]{Proposition}
\numberwithin{equation}{section}
\title{Generalized Weighted Composition Operators on Vector-Valued Weighted Bergman Space}
\author{\large Anuradha Gupta and Geeta Yadav$^*$}
\date{}
\begin{document}
\maketitle
\begin{abstract} 

In this research article the necessary and sufficient conditions for the norm of composition operator $C_{\Phi}$ on $\mathcal{A}_{\alpha}^2(H)$ to be one are obtained. Moreover, $C_{\Phi}$ is unitary on $\mathcal{A}_{\alpha}^2(H)$ if and only if it is co-isometry. The necessary and sufficient condition for Hermitian and normal composition operators on $\mathcal{A}_{\alpha}^2(H)$ are also explored. Also, the characterization for boundeness of generalized weighted composition operator is obtained under some condition on $\Phi.$

\textbf{Mathematics Subject Classification:} 47B33, 47B38 

\textbf{Keywords:} Co-isometry, composition operator, vector-valued weighted Bergman space, normal operator
\end{abstract}    
\section{Introduction and Preliminaries}
Let $\mathbb{D}=\{z \in \mathbb{C} : \|z\|<1 \}$ denote the open unit disk.
Let $\mathbb{N}_{0}$ denote the set of non-negative integers. It can be easily seen that for a vector space $Y,$ the set of all $Y$-valued functions on $\mathbb{D},$ denoted by $V(\mathbb{D}, Y),$ is a vector space under the standard pointwise addition and scalar multiplication.

For a complex Banach space $X,$ let $\mathcal {H}(\mathbb{D}, X)$ denote the space of analytic $X$-valued functions $f$ on $\mathbb{D},$ that is 
$$f(z)=\sum_{n=0}^{\infty} y_n z^n, \,\,\, y_n\in X \,\, \text{for all}\,\,z \in \mathbb{D} $$ 

For $\alpha >-1$ and $1\leq p<\infty$ the vector-valued weighted Bergman space $\mathcal{A}_{\alpha}^p(X)$ is defined by
\begin{equation*}
\mathcal{A}_{\alpha}^p(X)=\{f \in \mathcal{H}(\mathbb{D}, X):||f||^{p}_{\mathcal{A}_{\alpha}^p(X)}=  \int_{\mathbb{D}} ||f(z)||^p dA_{\alpha}(z) < \infty\}
\end{equation*}
where $dA_{\alpha}(z)= (\alpha+1)(1-|z|^2)^{\alpha} dA(z)$ and $dA$ denote the normalized area measure on $\mathbb{D}.$  In particular, for  $\alpha=0$ it is denoted by $\mathcal{A}^p(X)$ and  is called the vector-valued Bergman space. For $X=\mathbb{C},$ $\mathcal{A}_{\alpha}^p(X)$ is denoted by $\mathcal{A}_{\alpha}^p,$ weighted Bergman space.

Throughout the paper let $H$ denote a seperable complex Hilbert space with an orthonormal basis $\{e_{n}\}_{n \in \mathbb{N}_{0}}.$ Then, for $p=2,$ $\mathcal{A}_{\alpha}^2(H)$ is a Hilbert space  with the inner product defined by 

$$\langle f, g \rangle_{\mathcal{A}_{\alpha}^2(H)}= \int_{\mathbb{D}} \langle  f(z), g(z) \rangle  \, dA_{\alpha}(z) \; \text{for all} \; f,g \in \mathcal{A}_{\alpha}^2(H).$$
Equivalently, for $f(z)=\sum_{n=0}^{\infty} y_n z^n,$ $g(z)=\sum_{n=0}^{\infty} s_n z^n$ in $\mathcal{A}_{\alpha}^2(H)$  
$$\langle f, g \rangle_{\mathcal{A}_{\alpha}^2(H)}=\sum_{n=0}^{\infty} \frac{n! \, \Gamma(2+\alpha)}{\Gamma(n+2+\alpha)} \langle y_n,s_n \rangle,  $$
and
\begin{align*}
\mathcal{A}_{\alpha}^2(H)=\Big\{f(z)=\sum_{n=0}^{\infty} y_n z^n  \in \mathcal {H}(\mathbb{D},H): \,||f||_{\mathcal{A}_{\alpha}^2(H)}^2= \sum_{n=0}^{\infty} \frac{n! \, \Gamma(2+\alpha)}{\Gamma(n+2+\alpha)} ||y_n||^2 < \infty \Big\}.
\end{align*}

In this paper we will frequently use that for $n\in \mathbb{N}_{0}$ \cite{Book operator theory in function spaces}
\begin{equation}
\int_{\mathbb{D}} |z|^{2n} \, dA_{\alpha}(z)=  \frac{n! \, \Gamma(2+\alpha)}{\Gamma(n+2+\alpha)}.
\end{equation}

Let $V(\mathbb{D}, H)$ be the set of all $H$-valued functions and $W \subseteq V(\mathbb{D}, H)$ be a Hilbert space. Then, $W$ is said to be $H$-valued reproducing kernel Hilbert space on $\mathbb{D}$ if for each $z\in \mathbb{D}$ the linear map $L_{z}: W \longrightarrow H \; $ defined by 
$$ L_{z}(f)=f(z) \; \text{for all} \; f\in W,$$
is bounded. For more information on vector-valued reproducing kernel Hilbert space one can refer to \cite{Book on vector val rep kernel}.\\

Let $Z$ be the space of analytic functions defined on $\mathbb{D}$ and let $\Phi$ and $\Psi$ be analytic functions on $\mathbb{D}$ such that $\Phi$ is a self-map on $\mathbb{D}.$ Then, the weighted composition operator, $C_{\Psi,\Phi},$ induced by $\Phi$ and $\Psi$ on $Z$ is defined by
$$C_{\Psi,\Phi}f=\Psi \cdot (f  \circ \Phi) \;\, \text{for all}\, f\in Z. $$
If $\Psi \equiv 1,$ then $C_{\Psi,\Phi}$ is the composition operator $C_{\Phi}.$

Researchers find it intriguing to establish the relation between the function theoretic behaviours of $\Phi$ and $\Psi$ with the operator theoretic properties of weighted composition operator $C_{\Psi,\Phi}.$ Many researchers have studied these relationships on different Hilbert spaces, for example, Hardy space and Bergman space on the unit disc.\\
Sharma and Bhanu \cite{Imp Vector valued Hardy} obtained fundamental results for vector-valued Hardy Hilbert space and also proved many important results related to composition operators; like characterization for co-isometry, unitary, normal and Hermitian composition operators. For basics on vector-valued Bergman spaces one can refer to  \cite{Bergman and Bloch sp of vector valued} \cite{Intro to vector valued Bergman spaces} \cite{V Imp basics vector valued bergman}. Recently, Guo and Wang \cite{2022 paper} obtained complete characterization for the boundedness of difference of two weighted composition operators from weak to strong weighted vector-valued Bergman spaces.

This paper is organized in the following manner. In section 2, we obtain some basic characterization for $\mathcal{A}_{\alpha}^2(H)$ space including its reproducing kernel function. In section 3, we proved that for an analytic self-map $\Phi$ on $\mathbb{D},$ the norm of composition operator $C_{\Phi}$ on $\mathcal{A}_{\alpha}^2(H)$ is one if and only if $\Phi(0)=0.$ Further, we obtain that $C_{\Phi}$ is co-isometry if and only if $\Phi$ is a rotation on $\mathbb{D}$. Moreover, we showed that $C_{\Phi}$ is unitary if and only if it is co-isometry. In section 4 we obtain the necessary and sufficient conditions for adjoint operator of a composition operator on $\mathcal{A}_{\alpha}^2(H)$ to be some composition operator. We also completely characterized the Hermitian and normal composition operators on $\mathcal{A}_{\alpha}^2(H)$. In the last section we discuss the boundedness of generalized weighted composition operators on $\mathcal{A}_{\alpha}^2(H).$


 \section{The Space $\mathcal{A}_{\alpha}^2(H)$ }
In 1999, Sharma and Bhanu \cite{Imp Vector valued Hardy}  discussed some important characterizations like orthonormal basis and  reproducing kernel for vector-valued Hardy space. In 2003, orthonormal basis (\cite{Bergman and Bloch sp of vector valued}, \cite{V Imp basics vector valued bergman}) and  reproducing kernel \cite{V Imp basics vector valued bergman}  for vector-valued Bergman space were obtained. Motivated by \cite{Bergman and Bloch sp of vector valued}, \cite{Imp Vector valued Hardy} and \cite{V Imp basics vector valued bergman} we discuss some known and unkown characterizations of vector-valued weighted Bergman space $\mathcal{A}_{\alpha}^2(H).$ \\    
For $m,n \in \mathbb{N}_{0},$ function $\mathcal{E}_{m,n}: \mathbb{D} \longrightarrow H$ defined by
$$\mathcal{E}_{m,n}(z)=d_{m\alpha} \,z^m\, e_{n}, \; e_{n}\in H, $$
is in $\mathcal{A}_{\alpha}^2(H)$ where $d_{m\alpha}=\sqrt{\dfrac{\Gamma(m+2+\alpha)}{m! \, \Gamma(2+\alpha)}}.$ 

\begin{proposition} \label{orthonormal basis for vector val weig Bergman sp}
Set $\{\mathcal{E}_{m,n}: m,n \in \mathbb{N}_{0} \}$ forms an orthonormal basis for $\mathcal{A}_{\alpha}^2(H).$ 
\end{proposition}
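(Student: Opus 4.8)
The plan is to establish the two defining properties of an orthonormal basis separately: first that the family $\{\mathcal{E}_{m,n}\}$ is orthonormal, and then that its closed linear span is all of $\mathcal{A}_{\alpha}^2(H)$, which I would verify by checking that the only element orthogonal to every $\mathcal{E}_{m,n}$ is the zero function. Since $\mathcal{A}_{\alpha}^2(H)$ is a Hilbert space, these two facts together give the claim.

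For orthonormality, I would compute $\langle \mathcal{E}_{m,n}, \mathcal{E}_{m',n'}\rangle_{\mathcal{A}_{\alpha}^2(H)}$ directly from the series form of the inner product. The function $\mathcal{E}_{m,n}(z)=d_{m\alpha}z^m e_n$ has only a single nonzero power-series coefficient, namely the $z^m$-coefficient $d_{m\alpha}e_n$, so the sum defining the inner product collapses to a single term and is nonzero only when $m=m'$; the surviving factor is $\frac{m!\,\Gamma(2+\alpha)}{\Gamma(m+2+\alpha)}\,d_{m\alpha}^2\,\langle e_n,e_{n'}\rangle$. The key point is that the normalizing constant $d_{m\alpha}$ is chosen precisely so that $\frac{m!\,\Gamma(2+\alpha)}{\Gamma(m+2+\alpha)}\,d_{m\alpha}^2=1$, whence $\langle \mathcal{E}_{m,n},\mathcal{E}_{m',n'}\rangle=\delta_{m,m'}\langle e_n,e_{n'}\rangle=\delta_{m,m'}\delta_{n,n'}$, using that $\{e_n\}$ is orthonormal in $H$.

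For completeness I would take an arbitrary $f(z)=\sum_{m=0}^{\infty}y_m z^m\in\mathcal{A}_{\alpha}^2(H)$ satisfying $\langle f,\mathcal{E}_{m,n}\rangle=0$ for all $m,n\in\mathbb{N}_{0}$, and again use the series inner product to obtain $\langle f,\mathcal{E}_{m,n}\rangle=\frac{m!\,\Gamma(2+\alpha)}{\Gamma(m+2+\alpha)}\,d_{m\alpha}\,\langle y_m,e_n\rangle$. Since the scalar prefactor is strictly positive, this forces $\langle y_m,e_n\rangle=0$ for every $n$; because $\{e_n\}$ is an orthonormal basis of $H$, each coefficient $y_m=0$, and hence $f\equiv 0$. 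Thus the orthogonal complement of the family is trivial, which in a Hilbert space is equivalent to the family being complete.

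The bulk of the argument is routine bookkeeping, so I do not anticipate a genuine obstacle; the only point demanding care is keeping the two indices straight, with $m$ indexing the power of $z$ and $n$ indexing the Hilbert-space basis vector, and matching coefficients correctly in the sum so that the orthogonality in the $z$-variable (coming from the scalar moment identity for $\int_{\mathbb{D}}|z|^{2n}\,dA_{\alpha}(z)$) and the orthogonality in $H$ (coming from $\{e_n\}$) are each invoked in the right place.
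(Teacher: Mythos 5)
Your proposal is correct and follows essentially the same route as the paper: verify orthonormality by direct computation, then show the orthogonal complement of the family is trivial by extracting $\langle y_m, e_n\rangle = 0$ and invoking that $\{e_n\}$ is an orthonormal basis of $H$. The only cosmetic difference is that you work with the coefficient (series) form of the inner product while the paper integrates against $dA_\alpha$ and uses the moment identity, but these are the same computation since the paper states the two forms of the inner product as equivalent.
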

\begin{proof}
Consider
\begin{align*}
\langle \mathcal{E}_{m,n},\mathcal{E}_{s,t} \rangle_{\mathcal{A}_{\alpha}^2(H)} &=d_{m\alpha} d_{s\alpha}  \int_{\mathbb{D}} \langle  z^m e_{n}, z^s e_{t} \rangle  \, dA_{\alpha}(z) \\
                  &= d_{m\alpha} d_{s\alpha} \langle  e_{n},  e_{t} \rangle \int_{\mathbb{D}} z^m \bar{z}^s  \, dA_{\alpha}(z) \\
                         &= \begin{cases}
                         0  & \text{if}\; m \neq s \; \text{or} \; n \neq t \\
                         1  & \text{if}\; m= s \; \text{and}\; n=t
                         \end{cases}
\end{align*} 
Thus, $\{\mathcal{E}_{m,n}: m,n \in \mathbb{N}_{0} \}$ is an orthonormal set. To prove the required result it is sufficient to prove that $Span\{\mathcal{E}_{m,n}: m,n \in \mathbb{N}_{0} \}^{\perp}=\{0\}.$ Let $f(z)=\sum_{p=0}^{\infty} y_p z^p \in Span\{\mathcal{E}_{m,n}: m,n \in \mathbb{N}_{0} \}^{\perp}.$ Now, for $m,n \in \mathbb{N}_{0} $ 
\begin{align*}
\langle f,\mathcal{E}_{m,n} \rangle_{\mathcal{A}_{\alpha}^2(H)} 
                        &=\sum_{p=0}^{\infty} \int_{\mathbb{D}} \langle  y_{p}z^{p}, \mathcal{E}_{m,n}(z) \rangle  \, dA_{\alpha}(z) \\
                         &=d_{m\alpha}  \sum_{p=0}^{\infty} \int_{\mathbb{D}} z^{p} \bar{z}^{m} \langle  y_{p},e_{n} \rangle  \, dA_{\alpha}(z) \\
                         &=d_{m\alpha} \langle  y_{m},e_{n} \rangle  \int_{\mathbb{D}} z^{m} \bar{z}^{m}   \, dA_{\alpha}(z).
\end{align*}
Thus,  $\langle f,\mathcal{E}_{m,n} \rangle_{\mathcal{A}_{\alpha}^2(H)}=0$ for all $ m,n \in \mathbb{N}_{0},$ gives   $\langle  y_{m},e_{n} \rangle=0$ for all $ m,n \in \mathbb{N}_{0}.$ Since $\{e_{n} \}_{n \in  \mathbb{N}_{0}}$ is an orthonormal basis for Hilbert space $H,$ therefore, $y_{m}=0$ for all $ m \in \mathbb{N}_{0}$ and consequently $f \equiv 0.$
\end{proof}
If $f(z)=\sum_{n=0}^{\infty} y_n z^n  \in \mathcal{A}_{\alpha}^2(H),$ then its norm can be expressed in the following form:
\begin{equation} \label{norm in terms of dm alpha} 
||f||_{\mathcal{A}_{\alpha}^2(H)}^2=\sum_{n=0}^{\infty} \frac{1}{d^{2}_{n\alpha}} ||y_n||^2, 
\end{equation}
which will be used frequently in the subsequent result.
 

The following theorem ($n=1, p=2$) is obtained by Oliver (\cite{R V Oliver Thesis}, Theorem 2.1.1). However we have given an alternative proof for the same on the space $\mathcal{A}_{\alpha}^2(H)$ in the following: 
\begin{theorem} \label{fz norm less than f intosome norm} 
Let $\alpha >-1.$  Then, 
$$ ||f(z)|| \leq  \frac{||f||_{\mathcal{A}_{\alpha}^2(H)}}{\sqrt{(1-|z|^2)^{\alpha+2}}},$$
for any $f \in \mathcal{A}_{\alpha}^2(H)$ and $z \in \mathbb{D}.$
\end{theorem}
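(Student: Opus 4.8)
The plan is to reduce the pointwise estimate to a Cauchy--Schwarz inequality applied to the Taylor coefficients of $f$, together with an explicit evaluation of a generating function. First I would write $f(z)=\sum_{n=0}^{\infty} y_n z^n$ with $y_n\in H$ and, using the triangle inequality in $H$, bound
$$\|f(z)\|=\Big\|\sum_{n=0}^{\infty} y_n z^n\Big\|\le \sum_{n=0}^{\infty} \|y_n\|\,|z|^n.$$

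Next I would insert the normalizing factors $d_{n\alpha}$ so that the sum matches the norm identity \eqref{norm in terms of dm alpha}: writing $\|y_n\|\,|z|^n=\big(\|y_n\|/d_{n\alpha}\big)\big(d_{n\alpha}\,|z|^n\big)$ and applying the Cauchy--Schwarz inequality for scalar sequences gives
$$\sum_{n=0}^{\infty} \|y_n\|\,|z|^n \le \Big(\sum_{n=0}^{\infty} \frac{\|y_n\|^2}{d_{n\alpha}^2}\Big)^{1/2}\Big(\sum_{n=0}^{\infty} d_{n\alpha}^2\,|z|^{2n}\Big)^{1/2}.$$
By \eqref{norm in terms of dm alpha} the first factor is exactly $\|f\|_{\mathcal{A}_{\alpha}^2(H)}$, so it remains only to identify the second sum.

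The key computation is therefore the evaluation of $\sum_{n=0}^{\infty} d_{n\alpha}^2\,|z|^{2n}=\sum_{n=0}^{\infty}\dfrac{\Gamma(n+2+\alpha)}{n!\,\Gamma(2+\alpha)}\,|z|^{2n}$. I would recognize this as the binomial series $\sum_{n=0}^{\infty}\dfrac{\Gamma(n+\beta)}{n!\,\Gamma(\beta)}x^n=(1-x)^{-\beta}$ with $\beta=\alpha+2$ and $x=|z|^2\in[0,1)$, which converges since $|z|<1$ and yields $(1-|z|^2)^{-(\alpha+2)}$. Substituting back gives $\|f(z)\|\le \|f\|_{\mathcal{A}_{\alpha}^2(H)}\,(1-|z|^2)^{-(\alpha+2)/2}$, which is precisely the asserted inequality.

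The triangle inequality and Cauchy--Schwarz steps are routine; the single point requiring care is justifying the generating-function identity together with the convergence of the series on $\mathbb{D}$, so the main (mild) obstacle is recognizing and correctly applying the binomial expansion of $(1-x)^{-(\alpha+2)}$ with the Gamma-function coefficients. Everything else is bookkeeping with the constants $d_{n\alpha}$.
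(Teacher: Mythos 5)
Your proposal is correct and is essentially identical to the paper's own proof: triangle inequality on the Taylor expansion, Cauchy--Schwarz with the weights $d_{n\alpha}$, and evaluation of $\sum_{n=0}^{\infty} d_{n\alpha}^2 |z|^{2n}$ as the binomial series $(1-|z|^2)^{-(\alpha+2)}$. Your write-up merely makes explicit the generating-function step that the paper leaves implicit.
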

\begin{proof}
 Let $f(z)=\sum_{n=0}^{\infty} y_n z^n \in \mathcal{A}_{\alpha}^2(H).$ Then  
\begin{align*}
||f(z)|| &= ||\sum_{n=0}^{\infty} y_n z^n||  \\
          & \leq \sum_{n=0}^{\infty} ||y_n|| |z^n| \\
          &\leq \Big( \sum_{n=0}^{\infty}||y_n||^2 \frac{1}{d^{2}_{n\alpha}} \Big)^{1/2} \Big( \sum_{n=0}^{\infty} d^{2}_{n\alpha} |z|^{2n}\Big)^{1/2} \\
          &= \frac{||f||_{\mathcal{A}_{\alpha}^2(H)}}{\sqrt{(1-|z|^2)^{\alpha+2}}}.
\end{align*}
\end{proof}
From Theorem \ref{fz norm less than f intosome norm}, it clearly follows that for each $z\in \mathbb{D}$ the point-evaluation map  $L_{z}: \mathcal{A}_{\alpha}^2(H) \longrightarrow H,$ defined by 
$$ L_{z}(f)=f(z) \; \text{for all} \; f\in \mathcal{A}_{\alpha}^2(H),$$
is bounded and thus $ \mathcal{A}_{\alpha}^2(H)$ is a reproducing kernel Hilbert space.

Although the reproducing Kernel for the  vector-valued weighted Bergman space where the underlying Hilbert space need not be seperable, is known. For the completness we are discussing the reproducing kernel for the $H$-valued weighted Bergman spaces where $H$ is seperable.

From Theorem \ref{fz norm less than f intosome norm}, it also follows that for  $j\in \mathbb{N}_{0}$ and $ z\in \mathbb{D}$ the function  $\Lambda_{z}^{j} :\mathcal{A}_{\alpha}^2(H) \longrightarrow \mathbb{C}$ 
defined by 
\begin{equation} \label{for reproducing inner product eq 1}
\Lambda_{z}^{j}(f)=\langle f(z), e_{j} \rangle,
\end{equation}
 is a bounded linear functional. Therefore, by Riesz Representation Theorem it follows that there exists $K_{z}^{j} \in \mathcal{A}_{\alpha}^2(H)$ and satisfies 
\begin{equation} \label{for reproducing inner product eq 2} 
 \Lambda_{z}^{j}(f)=\langle f, K_{z}^{j} \rangle_{\mathcal{A}_{\alpha}^2(H)} \; \text{for all} \; f \in \mathcal{A}_{\alpha}^2(H). 
\end{equation}
Function $K_{z}^{j}$ is known as the reproducing kernel. 
From equations \eqref{for reproducing inner product eq 1} and \eqref{for reproducing inner product eq 2} it follows that 
\begin{equation} \label{for reproducing inner product eq 3} 
\Lambda_{z}^{j}(f\circ \Phi)=\langle (f \circ \Phi)(z), e_{j} \rangle= \langle f (\Phi (z)), e_{j} \rangle=\Lambda_{\Phi(z)}^{j}(f).
\end{equation}

\begin{lemma}  \label{reproducing kernel defined}
 The reproducing kernel function $K_{z}^{j}$ for $z \in \mathbb{D}$ and $ j \in \mathbb{N}_{0}$ is 
\begin{equation*}  \label{equation reproducing kernel defined}
K_{z}^{j}(w)=\frac{e_{j}}{(1-w\bar{z})^{2+\alpha}} 
\end{equation*}
\end{lemma}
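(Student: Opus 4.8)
The plan is to determine $K_{z}^{j}$ by expanding it in the orthonormal basis $\{\mathcal{E}_{m,n}\}$ supplied by Proposition \ref{orthonormal basis for vector val weig Bergman sp} and reading off its coefficients from the reproducing identity \eqref{for reproducing inner product eq 2}. The existence of $K_{z}^{j}\in\mathcal{A}_{\alpha}^2(H)$ is already guaranteed by the Riesz Representation Theorem, so it remains only to compute it. Since $\{\mathcal{E}_{m,n}: m,n\in\mathbb{N}_{0}\}$ is an orthonormal basis, I would write
\begin{equation*}
K_{z}^{j} = \sum_{m,n\in\mathbb{N}_{0}} \langle K_{z}^{j}, \mathcal{E}_{m,n}\rangle_{\mathcal{A}_{\alpha}^2(H)}\, \mathcal{E}_{m,n},
\end{equation*}
so the whole problem reduces to evaluating the Fourier coefficients $\langle K_{z}^{j}, \mathcal{E}_{m,n}\rangle_{\mathcal{A}_{\alpha}^2(H)}$.

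First I would apply \eqref{for reproducing inner product eq 2} with $f=\mathcal{E}_{m,n}$, together with $\mathcal{E}_{m,n}(z)=d_{m\alpha}z^m e_{n}$, to obtain
\begin{equation*}
\langle \mathcal{E}_{m,n}, K_{z}^{j}\rangle_{\mathcal{A}_{\alpha}^2(H)} = \Lambda_{z}^{j}(\mathcal{E}_{m,n}) = \langle d_{m\alpha}z^m e_{n}, e_{j}\rangle = d_{m\alpha}\, z^m\, \delta_{nj},
\end{equation*}
whence $\langle K_{z}^{j}, \mathcal{E}_{m,n}\rangle_{\mathcal{A}_{\alpha}^2(H)} = d_{m\alpha}\,\bar z^m\,\delta_{nj}$. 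Only the terms with $n=j$ survive, and using $\mathcal{E}_{m,j}(w)=d_{m\alpha}w^m e_{j}$ the double series collapses to
\begin{equation*}
K_{z}^{j}(w) = \sum_{m=0}^{\infty} d_{m\alpha}\,\bar z^m\, \mathcal{E}_{m,j}(w) = e_{j} \sum_{m=0}^{\infty} d_{m\alpha}^2\,(w\bar z)^m.
\end{equation*}
The final step is to recognize the scalar series: since $d_{m\alpha}^2 = \Gamma(m+2+\alpha)/(m!\,\Gamma(2+\alpha))$, the generalized binomial theorem gives $\sum_{m\ge 0} d_{m\alpha}^2 x^m = (1-x)^{-(2+\alpha)}$ for $|x|<1$, and taking $x=w\bar z$ (which satisfies $|w\bar z|<1$ for $w,z\in\mathbb{D}$) yields the claimed formula $K_{z}^{j}(w)=e_{j}/(1-w\bar z)^{2+\alpha}$.

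The routine-but-essential points are the identification of the binomial series and the convergence bookkeeping. To confirm the expansion genuinely lives in $\mathcal{A}_{\alpha}^2(H)$, I would note via \eqref{norm in terms of dm alpha} that $\|K_{z}^{j}\|_{\mathcal{A}_{\alpha}^2(H)}^2 = \sum_{m} d_{m\alpha}^2 |z|^{2m} = (1-|z|^2)^{-(2+\alpha)}$, which is finite for $z\in\mathbb{D}$ and consistent with the bound in Theorem \ref{fz norm less than f intosome norm}. One must also respect the conjugation convention: the coefficient $d_{m\alpha}\bar z^m$ appears precisely because the inner product on $H$ is conjugate-linear in its second slot, so that $\langle K_{z}^{j}, \mathcal{E}_{m,n}\rangle$ is the complex conjugate of $\langle \mathcal{E}_{m,n}, K_{z}^{j}\rangle$. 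An equally clean alternative would be to verify the formula directly: expand the candidate $e_{j}/(1-w\bar z)^{2+\alpha}$ as $\sum_{m} d_{m\alpha}^2\,\bar z^m e_{j}\, w^m$, pair it against an arbitrary $f(w)=\sum_{n} y_n w^n$ using the series form of the inner product, and observe that the $d_{m\alpha}^2$ factors cancel to leave $\sum_{n} z^n\langle y_n, e_{j}\rangle = \langle f(z), e_{j}\rangle = \Lambda_{z}^{j}(f)$; uniqueness of the Riesz representative then identifies this candidate with $K_{z}^{j}$.
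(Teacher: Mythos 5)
Your proposal is correct and follows essentially the same route as the paper: expand $K_{z}^{j}$ in the orthonormal basis $\{\mathcal{E}_{m,n}\}$, extract the Fourier coefficients $d_{m\alpha}\bar{z}^{m}\delta_{nj}$ from the reproducing identity, collapse the double sum to a single series in $w\bar{z}$, and identify it as the binomial series for $(1-w\bar{z})^{-(2+\alpha)}$. The extra checks you include (the norm computation and the alternative direct verification) are sound but not needed beyond what the paper does.
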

\begin{proof} From Proposition \ref{orthonormal basis for vector val weig Bergman sp}, it follows that we can write $K_{z}^{j} \in \mathcal{A}_{\alpha}^2(H) $ as
$K_{z}^{j}=\sum_{m,n \in \mathbb{N}_{0}} \langle K_{z}^{j},\mathcal{E}_{m,n} \rangle_{\mathcal{A}_{\alpha}^2(H)} \mathcal{E}_{m,n}$
\begin{align*}
K_{z}^{j}(w) 
             &=\sum_{m,n \in \mathbb{N}_{0}} \overline{\langle \mathcal{E}_{m,n}(z), e_{j} \rangle} \mathcal{E}_{m,n}(w)  \\
             &=\sum_{m,n \in \mathbb{N}_{0}} d_{m\alpha}\bar{z}^m \langle e_{j},e_{n} \rangle \mathcal{E}_{m,n}(w) \\   
             &=\sum_{m \in \mathbb{N}_{0}} d^{2}_{m\alpha}\bar{z}^m w^{m} e_{j} \\ 
             &=\frac{e_{j} }{(1-w\bar{z})^{2+\alpha}}.                   
\end{align*}
\end{proof}
From Lemma \ref{reproducing kernel defined} and equations \eqref{for reproducing inner product eq 1}, \eqref{for reproducing inner product eq 2}  it follows that for $z \in \mathbb{D}, j \in \mathbb{N}_{0}$ 
$$||K_{z}^{j}||^2= \langle K_{z}^{j}, K_{z}^{j} \rangle_{\mathcal{A}_{\alpha}^2(H)}= \langle K_{z}^{j}(z), e_{j} \rangle=\langle \frac{e_{j}}{(1-|z|^2)^{2+\alpha}}, e_{j} \rangle=\frac{1}{(1-|z|^2)^{2+\alpha}}.$$
Thus,
\begin{equation} \label{norm of reproducing kernel} 
||K_{z}^{j}||=\frac{1}{\sqrt{(1-|z|^2)^{2+\alpha}}}.
\end{equation}
\begin{lemma}  \label{lemma linear span of kernel}  
Set  $Span\{K_{z}^{j}:j\in \mathbb{N}_{0},z\in \mathbb{D} \}$ is dense in $\mathcal{A}_{\alpha}^2(H).$
\end{lemma}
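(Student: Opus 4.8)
The plan is to use the standard Hilbert-space principle that a collection of vectors has dense linear span if and only if its orthogonal complement is trivial. So I would let $\mathcal{M}=\mathrm{Span}\{K_{z}^{j}:j\in\mathbb{N}_{0},\,z\in\mathbb{D}\}$ and show that $\mathcal{M}^{\perp}=\{0\}$, since $\overline{\mathcal{M}}=\mathcal{A}_{\alpha}^2(H)$ is equivalent to $\mathcal{M}^{\perp}=(\overline{\mathcal{M}})^{\perp}=\{0\}$ in a Hilbert space.

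First I would take an arbitrary $f\in\mathcal{M}^{\perp}$. By definition this means $\langle f,K_{z}^{j}\rangle_{\mathcal{A}_{\alpha}^2(H)}=0$ for every $z\in\mathbb{D}$ and every $j\in\mathbb{N}_{0}$, because orthogonality to each spanning vector is the same as orthogonality to their span. The next step is to invoke the reproducing property: combining \eqref{for reproducing inner product eq 1} and \eqref{for reproducing inner product eq 2}, the inner product $\langle f,K_{z}^{j}\rangle_{\mathcal{A}_{\alpha}^2(H)}$ equals $\Lambda_{z}^{j}(f)=\langle f(z),e_{j}\rangle$. Hence the orthogonality hypothesis translates into the pointwise scalar conditions
\[
\langle f(z),e_{j}\rangle=0\qquad\text{for all } z\in\mathbb{D},\ j\in\mathbb{N}_{0}.
\]

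To finish, I would fix $z\in\mathbb{D}$ and use that $\{e_{j}\}_{j\in\mathbb{N}_{0}}$ is an orthonormal basis of $H$: a vector of $H$ whose inner product against every basis element vanishes must be the zero vector, so $f(z)=0$. Since $z\in\mathbb{D}$ was arbitrary, $f\equiv 0$, which gives $\mathcal{M}^{\perp}=\{0\}$ and therefore the density claim.

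I do not expect a genuine obstacle here; the argument is the routine ``orthogonal complement is trivial'' criterion together with the reproducing identity already established for $K_{z}^{j}$. The only point that needs a moment's care is the passage from orthogonality to \emph{all} kernels $K_{z}^{j}$ to the conclusion $f(z)=0$ pointwise, which is exactly where the completeness of the orthonormal basis $\{e_{j}\}$ of $H$ is used; everything else is immediate from the reproducing property.
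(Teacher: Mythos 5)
Your proposal is correct and matches the paper's own argument essentially line for line: the paper also takes $f$ in the orthogonal complement of the span, uses equations \eqref{for reproducing inner product eq 1} and \eqref{for reproducing inner product eq 2} to get $\langle f(z),e_{j}\rangle=0$ for all $z\in\mathbb{D}$ and $j\in\mathbb{N}_{0}$, and then invokes the completeness of the orthonormal basis $\{e_{n}\}$ of $H$ to conclude $f\equiv 0$. The only cosmetic difference is that the paper cites a textbook (Kreyszig) for the ``trivial orthogonal complement implies dense span'' principle, which you simply state directly.
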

\begin{proof}
Let $f\in (Span\{K_{z}^{j}:j\in \mathbb{N}_{0},z\in \mathbb{D} \})^{\perp}.$ Then, from equations \eqref{for reproducing inner product eq 1} and \eqref{for reproducing inner product eq 2} we get that  
\begin{align*}
\langle f,K_{z}^{j} \rangle=0 \Rightarrow  \langle f(z),e_{j} \rangle=0 \;\text{for all}\, z\in \mathbb{D} \, \text{and}\, j\in \mathbb{N}_{0}.
\end{align*}
Since $\{e_{n}\}_{n \in \mathbb{N}_{0}}$ is an orthonormal basis for $H,$ it follows that  $f(z)=0$ for all $z\in \mathbb{D}$ and hence, $f\equiv 0.$ Therefore, by \cite{Kreyszig} the required result follows.
\end{proof}
\section{Co-isometry and Unitary Composition Operator on $\mathcal{A}_{\alpha}^2(H)$}
In this section we completely characterize the co-isometry composition operator on $\mathcal{A}_{\alpha}^2(H).$ We have also discussed that composition operator is co-isometry if and only if it is unitary. We know that a bounded linear operator $T$ on a Hilbert space $Y$ is an isometry if $T^{*}T=I_{Y}$ and co-isometry if $T^{*}$ is an isometry, that is, $TT^{*}=I_{Y}.$ Further, $T$ is unitary if both $T$ and $T^{*}$ are isometries on $Y.$ \\

It is well known that for $c\in \mathbb{D},$ if function $\Phi_{c}:\mathbb{D} \longrightarrow \mathbb{D}$ is defined by
$$\Phi_{c}(z)=\frac{c-z}{1- \bar{c}z} \; \text{for all} \; z\in \mathbb{D},$$
then $\Phi\in Aut(\mathbb{D}),$ set of all analytic self map on $\mathbb{D}$ that are bijective. Also, 
 $\Phi_{c}^{-1}=\Phi_{c}$ and $\Phi_{c}'(z)=-\dfrac{1-|c|^2}{(1-\bar{c}z)^2}.$\\
The following important result is well known for composition operators on $\mathcal{A}_{\alpha}^p$ \cite{Book operator theory in function spaces}, scalar-valued weighted Bergman space. Motivated by the same result we prove it for the vector-valued Bergman space, $\mathcal{A}_{\alpha}^2(H)$. In the following theorem we have obtained an upper bound for the norm of composition operator on $\mathcal{A}_{\alpha}^2(H).$
\begin{theorem} \label{upper bound norm of comp op in term of induced fun}
Let $\Phi$ be an analytic self-map on $\mathbb{D}.$ Then, 
$$||C_{\Phi}|| \leq \sqrt{\left(\frac{1+|\Phi(0)|}{1-|\Phi(0)|}\right)^{2+\alpha}},$$
and consequently $C_{\Phi}$ is bounded on $\mathcal{A}_{\alpha}^2(H).$ 
\end{theorem}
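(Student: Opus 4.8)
The plan is to transplant the classical scalar argument for $\mathcal{A}_\alpha^2$ from \cite{Book operator theory in function spaces} into the vector-valued setting. The single extra ingredient needed is the observation that for any analytic $f \in \mathcal{H}(\mathbb{D},H)$ the real-valued function $u(z)=\|f(z)\|^2$ is subharmonic on $\mathbb{D}$: indeed $\|f(z)\|=\sup_{\|\xi\|\le 1}|\langle f(z),\xi\rangle|$ is a supremum of moduli of scalar analytic functions, hence subharmonic, and composing with the convex increasing map $t\mapsto t^2$ keeps subharmonicity. Once $u$ is known to be subharmonic, every remaining step concerns a single real function integrated against $dA_\alpha$, so the scalar machinery applies verbatim.

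First I would reduce to the centered case. Setting $a=\Phi(0)$ and taking $\Phi_a$ to be the involutive automorphism from the preliminaries, the map $\psi:=\Phi_a\circ\Phi$ satisfies $\psi(0)=\Phi_a(a)=0$. Since $\Phi_a^{-1}=\Phi_a$, one has $\Phi=\Phi_a\circ\psi$, and therefore $C_\Phi f = f\circ\Phi_a\circ\psi = C_\psi(C_{\Phi_a}f)$, i.e. the factorization $C_\Phi=C_\psi C_{\Phi_a}$. Consequently $\|C_\Phi\|\le\|C_\psi\|\,\|C_{\Phi_a}\|$, and it suffices to estimate the two factors separately.

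For the centered factor I would show $\|C_\psi\|\le 1$ via Littlewood's subordination principle. Writing $\|C_\psi f\|^2=\int_{\mathbb{D}}u(\psi(z))\,dA_\alpha(z)$ in polar coordinates and applying the subordination inequality $\int_0^{2\pi}u(\psi(re^{i\theta}))\,d\theta\le\int_0^{2\pi}u(re^{i\theta})\,d\theta$ (valid because $u$ is subharmonic and $\psi(0)=0$) to each circle of radius $r$, then integrating the resulting radial inequality against the nonnegative weight $(\alpha+1)(1-r^2)^\alpha r$, yields $\|C_\psi f\|\le\|f\|$. This is the step I expect to be the main obstacle, since it is precisely where the vector-valued subharmonicity of $\|f\|^2$ is used and where the passage from the circle-mean inequality to the full weighted integral must be justified carefully.

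For the automorphism factor I would compute exactly. Using $z=\Phi_a(w)$ as a change of variables together with the standard identities $1-|\Phi_a(w)|^2=\tfrac{(1-|a|^2)(1-|w|^2)}{|1-\bar a w|^2}$ and $|\Phi_a'(w)|^2=\tfrac{(1-|a|^2)^2}{|1-\bar a w|^4}$, the measure transforms as $dA_\alpha(z)=\tfrac{(1-|a|^2)^{\alpha+2}}{|1-\bar a w|^{2\alpha+4}}\,dA_\alpha(w)$, so that $\|C_{\Phi_a}f\|^2=\int_{\mathbb{D}}\|f(w)\|^2\,\tfrac{(1-|a|^2)^{\alpha+2}}{|1-\bar a w|^{2\alpha+4}}\,dA_\alpha(w)$. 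Bounding $|1-\bar a w|\ge 1-|a|$ gives $\tfrac{(1-|a|^2)^{\alpha+2}}{|1-\bar a w|^{2\alpha+4}}\le\left(\tfrac{1+|a|}{1-|a|}\right)^{\alpha+2}$, whence $\|C_{\Phi_a}\|\le\left(\tfrac{1+|a|}{1-|a|}\right)^{(\alpha+2)/2}$. Multiplying this with $\|C_\psi\|\le 1$ and recalling $a=\Phi(0)$ produces the claimed bound $\|C_\Phi\|\le\sqrt{\left(\tfrac{1+|\Phi(0)|}{1-|\Phi(0)|}\right)^{2+\alpha}}$, and boundedness of $C_\Phi$ on $\mathcal{A}_\alpha^2(H)$ is immediate.
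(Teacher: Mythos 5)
Your proposal is correct and takes essentially the same route as the paper: reduction to the centered case via the involutive automorphism $\Phi_a$ (the paper's $\Phi_c$), Littlewood's subordination theorem applied to the subharmonic function $z \mapsto \|f(z)\|^2$, and then the change of variables with the elementary bound $|1-\bar{a}w| \geq 1-|a|$. The only differences are cosmetic: you package the argument as the operator factorization $C_\Phi = C_\psi C_{\Phi_a}$ and supply your own justification of the vector-valued subharmonicity, whereas the paper chains the same integral inequalities directly and cites the literature for the subharmonicity of $\|(f\circ\Phi_c)(\cdot)\|^2$.
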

\begin{proof}
Let $c=\Phi(0)$ and $\Psi$ be an analytic self-map on $\mathbb{D}$ defined by
\begin{equation} \label{comp op upper bound eq*}
\Psi(z)=(\Phi_{c} \circ \Phi)(z) \; \text{for all} \; z \in \mathbb{D}.
\end{equation}
 Then $\Psi(0)=\Phi_{c}(c)=0$ and applying $\Phi_{c}^{-1}$ on both sides of equation \eqref{comp op upper bound eq*} we get $(\Phi_{c} \circ \Psi)(z) =\Phi(z).$ 
Since $z \longrightarrow ||(f \circ \Phi_{c})(z)||^2$  is subharmonic \cite{book subharmonic result} for $f \in \mathcal{A}_{\alpha}^2(H),$ therefore, by Littlewood Subordination Theorem for $r\in (0,1)$ we have
\begin{align*}
\int_{0}^{2\pi} ||(f \circ \Phi)(re^{i\theta})||^{2} d\theta &=\int_{0}^{2\pi} ||(f \circ \Phi_{c}) (\Psi(re^{i\theta}))||^2 d\theta \\
                                                               & \leq \int_{0}^{2\pi}||(f \circ \Phi_{c}) (re^{i\theta})||^2 d\theta. 
\end{align*}
This implies 
$$ \int_{\mathbb{D}} ||(f \circ \Phi)(z)||^2 dA_{\alpha}(z) \leq  \int_{\mathbb{D}} ||(f \circ \Phi_{c})(z)||^2 \, dA_{\alpha}(z).$$
Applying change of variable in the integral on right side we get
$$ \int_{\mathbb{D}} ||(f \circ \Phi)(z)||^2 \, dA_{\alpha}(z) \leq \int_{\mathbb{D}} ||f(z)||^2  \frac{(1-|c|^2)^{2+\alpha}}{|1-\bar{c}z|^{2(2+\alpha)}} \, dA_{\alpha}(z). $$
For $c, z \in \mathbb{D}$ it can be easily seen that $\dfrac{1}{|1-\bar{c}z|} \leq \dfrac{1}{1-|c|}.$ Using this in above equation we get
\begin{align*}
\int_{\mathbb{D}} ||(f \circ \Phi)(z)||^2 \,dA(z) & \leq \frac{(1-|c|^2)^{2+\alpha}}{(1-|c|)^{2(2+\alpha)}} \int_{\mathbb{D}} {||f(z)||^2} \, dA_{\alpha}(z) \\
                                          &=\left(\frac{1+|\Phi(0)|}{1-|\Phi(0)|}\right)^{2+\alpha} \int_{\mathbb{D}} {||f(z)||^2} \, dA_{\alpha}(z).  
\end{align*}
Thus, 
$$||C_{\Phi}f||_{\mathcal{A}_{\alpha}^2(H)}^2 \leq \left(\frac{1+|\Phi(0)|}{1-|\Phi(0)|}\right)^{2+\alpha} ||f||_{\mathcal{A}_{\alpha}^2(H)}^2 \;\; \text{for all} \; f \in \mathcal{A}_{\alpha}^2(H).$$ 
Hence,
 $$||C_{\Phi}|| \leq  \sqrt{\left(\frac{1+|\Phi(0)|}{1-|\Phi(0)|}\right)^{2+\alpha}}. $$      
\end{proof}
\begin{corollary} \label{if phi zero is zero then comp op norm one}
Let $\Phi$ be an analytic self-map on $\mathbb{D}$ such that $\Phi(0)=0.$ Then, $||C_{\Phi}||=1.$
\end{corollary}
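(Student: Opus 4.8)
The plan is to establish matching upper and lower bounds and conclude that the norm equals one. The upper bound is essentially free: since $\Phi(0)=0$, the quantity $\frac{1+|\Phi(0)|}{1-|\Phi(0)|}$ equals $1$, so Theorem \ref{upper bound norm of comp op in term of induced fun} immediately gives
\[
\|C_{\Phi}\| \leq \sqrt{\left(\tfrac{1+|\Phi(0)|}{1-|\Phi(0)|}\right)^{2+\alpha}} = 1.
\]
So the entire content of the corollary is the reverse inequality $\|C_{\Phi}\| \geq 1$, which is where I would spend the argument. This step is the one that genuinely uses the hypothesis $\Phi(0)=0$; the upper bound alone holds (with a possibly larger constant) for arbitrary self-maps.

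For the lower bound I would exhibit a single unit vector that $C_{\Phi}$ preserves in norm. The cleanest choice is a constant function: fix a unit vector $y_0 \in H$ and set $f(z) \equiv y_0$, i.e. $f(z) = y_0 z^0$. By the norm formula \eqref{norm in terms of dm alpha} and the fact that $d_{0\alpha} = \sqrt{\Gamma(2+\alpha)/\Gamma(2+\alpha)} = 1$, we have $f \in \mathcal{A}_{\alpha}^2(H)$ with $\|f\|_{\mathcal{A}_{\alpha}^2(H)} = \|y_0\| = 1$. Since $f$ is constant, $C_{\Phi}f = f\circ\Phi \equiv y_0 = f$, so $\|C_{\Phi}f\|_{\mathcal{A}_{\alpha}^2(H)} = 1 = \|f\|_{\mathcal{A}_{\alpha}^2(H)}$, forcing $\|C_{\Phi}\| \geq 1$. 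Combined with the upper bound this yields $\|C_{\Phi}\| = 1$.

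An equivalent route, which ties in more directly with the reproducing-kernel machinery just developed, is to observe that equation \eqref{for reproducing inner product eq 3} gives the adjoint action $C_{\Phi}^{*}K_{z}^{j} = K_{\Phi(z)}^{j}$. Evaluating at $z = 0$ and using $\Phi(0) = 0$ shows $C_{\Phi}^{*}K_{0}^{j} = K_{0}^{j}$, so $K_{0}^{j}$ is an eigenvector of $C_{\Phi}^{*}$ with eigenvalue $1$; since $\|K_{0}^{j}\| = 1$ by \eqref{norm of reproducing kernel}, we again get $\|C_{\Phi}\| = \|C_{\Phi}^{*}\| \geq 1$. These two arguments are in fact the same, because Lemma \ref{reproducing kernel defined} gives $K_{0}^{j}(w) = e_{j}$, the constant function. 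The proof carries no serious obstacle; the only points requiring care are confirming $d_{0\alpha} = 1$ so that the constant function is genuinely a unit vector, and noting that the upper bound of Theorem \ref{upper bound norm of comp op in term of induced fun} collapses exactly to $1$ under $\Phi(0) = 0$.
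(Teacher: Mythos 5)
Your proof is correct and essentially identical to the paper's: the paper likewise takes a constant function $g \equiv e_{n}$ (fixed by $C_{\Phi}$) to get $\|C_{\Phi}\| \geq 1$, and specializes Theorem \ref{upper bound norm of comp op in term of induced fun} to $\Phi(0)=0$ for the upper bound $\|C_{\Phi}\| \leq 1$. One side remark in your write-up is inverted, though: the constant-function lower bound uses no hypothesis at all (constants are preserved by every composition operator, and indeed $\|C_{\Phi}\|\geq 1$ holds for every analytic self-map), so it is the upper bound, not the lower one, that genuinely uses $\Phi(0)=0$.
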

\begin{proof}
Define constant function $g \equiv e_{n} \in \mathcal{A}_{\alpha}^2(H)$ for $n \in \mathbb{N},$ then  
$$ ||C_{\Phi}|| \geq \frac{||C_{\Phi}g||_{\mathcal{A}_{\alpha}^2(H)}}{||g||_{\mathcal{A}_{\alpha}^2(H)}}=\frac{||g||_{\mathcal{A}_{\alpha}^2(H)}}{||g||_{\mathcal{A}_{\alpha}^2(H)}}=1. $$ 
Taking $\Phi(0)=0$ in Theorem \ref{upper bound norm of comp op in term of induced fun} we get that $ ||C_{\Phi}|| \leq 1.$  Hence, $||C_{\Phi}||=1.$
\end{proof}  
Recall that the adjoint operator, $T^{*},$ of a bounded linear operator $T$ on a Hilbert space $Z$ is defined by 
 $$\langle Tx,y \rangle= \langle x,T^{*}y \rangle \;\, \text{for all} \; x,y \in Z .$$

Now, for $z \in \mathbb{D}$ and $j\in \mathbb{N}_{0}$
\begin{equation} \label{Cphi star Kz value of function}
\langle f, C_{\Phi}^{*}K_{z}^{j} \rangle_{\mathcal{A}_{\alpha}^2(H)} =\langle C_{\Phi}f, K_{z}^{j} \rangle_{\mathcal{A}_{\alpha}^2(H)}= \langle f \circ \Phi, K_{z}^{j} \rangle_{\mathcal{A}_{\alpha}^2(H)} \;\, \text{for all} \; f \in \mathcal{A}_{\alpha}^2(H).
\end{equation}
Combining equations \eqref{for reproducing inner product eq 2}, \eqref{for reproducing inner product eq 3} and \eqref{Cphi star Kz value of function} we get
$$\langle f, C_{\Phi}^{*}K_{z}^{j} \rangle_{\mathcal{A}_{\alpha}^2(H)}=\Lambda_{z}^{j}(f \circ \Phi)=\Lambda_{\Phi(z)}^{j}(f)=\langle f,K_{\Phi(z)}^{j} \rangle_{\mathcal{A}_{\alpha}^2(H)} \;\, \text{for all} \; f \in \mathcal{A}_{\alpha}^2(H). $$
Hence, 
\begin{equation} \label{adjoint of comp on kernel}
C_{\Phi}^{*}K_{z}^{j}=K_{\Phi(z)}^{j}.
\end{equation}
The following result is motivated by (\cite{Martinez and Rosenthal}, Theorem 5.1.10) result on Hardy space and the proof for  $\mathcal{A}_{\alpha}^2(H)$ space follows on the similar lines.   
\begin{theorem} \label{theorem lower bound for compo op}
Let $\Phi$ be an analytic self-map on $\mathbb{D}.$ Then, 
\begin{equation} \label{lower bound for norm of comp op}
 \frac{1}{\sqrt{(1-|\Phi(0)|^2)^{2+\alpha}}} \leq ||C_{\Phi}||,
 \end{equation}
 for $C_{\Phi}$ on $\mathcal{A}_{\alpha}^2(H).$
\end{theorem}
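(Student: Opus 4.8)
The plan is to test the adjoint $C_{\Phi}^{*}$ against the reproducing kernel anchored at the origin. Observe first that by Lemma \ref{reproducing kernel defined} the kernel at $z=0$ is the constant function $K_{0}^{j}(w)=e_{j}$, so equation \eqref{norm of reproducing kernel} gives $\|K_{0}^{j}\|=1$. This unit vector is the natural test function because the adjoint acts on kernels in the particularly simple way recorded in equation \eqref{adjoint of comp on kernel}.

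Concretely, I would apply \eqref{adjoint of comp on kernel} with $z=0$ to obtain $C_{\Phi}^{*}K_{0}^{j}=K_{\Phi(0)}^{j}$. The operator-norm inequality then yields
$$\|C_{\Phi}^{*}\| \geq \frac{\|C_{\Phi}^{*}K_{0}^{j}\|}{\|K_{0}^{j}\|} = \|K_{\Phi(0)}^{j}\|,$$
and evaluating the right-hand side through \eqref{norm of reproducing kernel} with $z=\Phi(0)$ gives exactly $\|K_{\Phi(0)}^{j}\| = 1/\sqrt{(1-|\Phi(0)|^{2})^{2+\alpha}}$.

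To finish I would invoke the boundedness of $C_{\Phi}$ from Theorem \ref{upper bound norm of comp op in term of induced fun}, which guarantees that $C_{\Phi}^{*}$ is well-defined and that $\|C_{\Phi}\|=\|C_{\Phi}^{*}\|$. Combining this equality with the previous display produces the claimed lower bound \eqref{lower bound for norm of comp op}.

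The argument is essentially a single estimate, so I do not anticipate a genuine obstacle; the only points requiring care are purely bookkeeping, namely confirming that $K_{0}^{j}$ really is the constant vector $e_{j}$ (hence of norm one) and that the formula in \eqref{adjoint of comp on kernel}, derived for general $z$, applies verbatim at $z=0$. Everything else is the standard reproducing-kernel device for bounding $\|T\|$ from below by the action of $T^{*}$ on normalized kernels.
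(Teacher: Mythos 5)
Your proposal is correct and is precisely the argument the paper intends: the paper omits a written proof and instead cites Theorem 5.1.10 of Mart\'{\i}nez-Avenda\~{n}o and Rosenthal, whose Hardy-space proof is exactly your device of applying \eqref{adjoint of comp on kernel} at $z=0$, noting $\|K_{0}^{j}\|=1$, and using \eqref{norm of reproducing kernel} together with $\|C_{\Phi}\|=\|C_{\Phi}^{*}\|$ to get $\|C_{\Phi}\|\geq \|K_{\Phi(0)}^{j}\|$. There are no gaps; your adaptation to $\mathcal{A}_{\alpha}^2(H)$ is exactly what the paper means by ``follows on the similar lines.''
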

The following result gives the bounds of composition operator on $\mathcal{A}_{\alpha}^2(H)$ which follows directly from Theorem \ref{upper bound norm of comp op in term of induced fun} and Theorem \ref{theorem lower bound for compo op}.
\begin{corollary}
Let $\Phi$ be an analytic self-map on $\mathbb{D}.$ Then,
\begin{equation*} 
\frac{1}{\sqrt{(1-|\Phi(0)|^2)^{2+\alpha}}} \leq ||C_{\Phi}|| \leq \sqrt{\left(\frac{1+|\Phi(0)|}{1-|\Phi(0)|}\right)^{2+\alpha}}
 \end{equation*}
\end{corollary}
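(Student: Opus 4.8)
The plan is to exploit the action of the adjoint operator on reproducing kernels, which has already been computed in equation \eqref{adjoint of comp on kernel}, together with the norm formula \eqref{norm of reproducing kernel}. The central observation is that lower bounds on an operator norm are obtained by testing against explicit vectors, and the reproducing kernels $K_z^j$ are the natural test vectors here since $C_\Phi^*$ maps them to kernels in a controlled way. Since $\|C_\Phi\| = \|C_\Phi^*\|$, it suffices to produce a good lower bound for $\|C_\Phi^*\|$.

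First I would fix an arbitrary index $j \in \mathbb{N}_0$ and evaluate at the point $z = 0$. By the norm formula \eqref{norm of reproducing kernel}, $\|K_0^j\| = 1/\sqrt{(1-|0|^2)^{2+\alpha}} = 1$, so $K_0^j$ is a unit vector. Next I would apply the adjoint-on-kernel identity \eqref{adjoint of comp on kernel} with $z = 0$, which yields $C_\Phi^* K_0^j = K_{\Phi(0)}^j$. Invoking \eqref{norm of reproducing kernel} once more, this time at the point $\Phi(0) \in \mathbb{D}$, gives
\begin{equation*}
\|C_\Phi^* K_0^j\| = \|K_{\Phi(0)}^j\| = \frac{1}{\sqrt{(1-|\Phi(0)|^2)^{2+\alpha}}}.
\end{equation*}

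Finally I would assemble these into the bound. From the definition of the operator norm, $\|C_\Phi^*\| \ge \|C_\Phi^* K_0^j\|/\|K_0^j\|$, and since the numerator equals $1/\sqrt{(1-|\Phi(0)|^2)^{2+\alpha}}$ while the denominator equals $1$, we obtain exactly the claimed estimate for $\|C_\Phi^*\|$; the equality $\|C_\Phi\| = \|C_\Phi^*\|$ then transfers it to $\|C_\Phi\|$. I do not expect any genuine obstacle in this argument: the only real decision is the choice of test vector, and the point $z=0$ is forced because it is the one point where the kernel norm normalizes to $1$, so that the computed image norm becomes directly a lower bound for $\|C_\Phi^*\|$ without any extraneous factor. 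The boundedness of $C_\Phi$ (hence of $C_\Phi^*$) needed to legitimize these manipulations is already guaranteed by Theorem \ref{upper bound norm of comp op in term of induced fun}.
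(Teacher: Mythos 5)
Your proposal is correct and follows essentially the same route as the paper: the upper bound is exactly Theorem \ref{upper bound norm of comp op in term of induced fun}, and your reproducing-kernel computation (testing $C_{\Phi}^{*}$ on the unit vector $K_{0}^{j}$ via \eqref{adjoint of comp on kernel} and \eqref{norm of reproducing kernel}) is precisely the standard proof of the lower bound, Theorem \ref{theorem lower bound for compo op}, which the paper states without proof by citing Mart\'{\i}nez-Avenda\~{n}o and Rosenthal. In effect you have supplied the details the paper delegates to that reference, so there is no gap and no genuine divergence in method.
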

The following result gives the necessary condition for an isometric composition operator on $\mathcal{A}_{\alpha}^2(H).$ 
\begin{corollary} \label{comp op norm one implies phi zero is zero}
Let $\Phi$ be an analytic self-map on $\mathbb{D}$  and  $C_{\Phi}$ be an isometry on $\mathcal{A}_{\alpha}^2(H)$. Then, $\Phi(0)=0$
\end{corollary}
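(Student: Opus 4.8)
The plan is to combine the lower estimate for $\|C_\Phi\|$ from Theorem \ref{theorem lower bound for compo op} with the elementary fact that every isometry has operator norm one, and then read off $\Phi(0)=0$ from a monotonicity argument.

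First I would note that since $C_\Phi$ is an isometry on the nontrivial Hilbert space $\mathcal{A}_\alpha^2(H)$, we have $\|C_\Phi f\|_{\mathcal{A}_\alpha^2(H)} = \|f\|_{\mathcal{A}_\alpha^2(H)}$ for every $f$, and hence $\|C_\Phi\| = 1$. Feeding this into the lower bound $\frac{1}{\sqrt{(1-|\Phi(0)|^2)^{2+\alpha}}} \le \|C_\Phi\|$ of Theorem \ref{theorem lower bound for compo op} yields $\frac{1}{\sqrt{(1-|\Phi(0)|^2)^{2+\alpha}}} \le 1$, which rearranges to $(1-|\Phi(0)|^2)^{2+\alpha} \ge 1$.

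The remaining step is purely real-analytic. Because $\Phi$ is a self-map of $\mathbb{D}$ we have $|\Phi(0)| < 1$, so $0 < 1 - |\Phi(0)|^2 \le 1$; and since $\alpha > -1$ forces the exponent $2+\alpha$ to be positive, a base in $(0,1]$ raised to this power cannot exceed one, with equality precisely when the base equals one. Hence the inequality $(1-|\Phi(0)|^2)^{2+\alpha} \ge 1$ must in fact be an equality, which compels $1-|\Phi(0)|^2 = 1$, i.e. $\Phi(0) = 0$.

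I do not anticipate a genuine obstacle, as the heavy lifting is already contained in Theorem \ref{theorem lower bound for compo op}; the one place requiring care is the invocation of $2+\alpha > 0$ in the last step, since without knowing the sign of the exponent the bound $(1-|\Phi(0)|^2)^{2+\alpha}\ge 1$ would not pin down $\Phi(0)$. As an alternative route that sidesteps the lower-bound theorem, one could argue directly from the adjoint action on kernels: an isometry satisfies $\|C_\Phi^*\| = \|C_\Phi\| = 1$, so applying $C_\Phi^*$ to the reproducing kernel at the origin and using $C_\Phi^* K_0^j = K_{\Phi(0)}^j$ (equation \eqref{adjoint of comp on kernel} at $z=0$) together with \eqref{norm of reproducing kernel} gives $\|K_{\Phi(0)}^j\| \le \|K_0^j\|$, which is exactly the same inequality in disguise and closes the argument by the identical monotonicity step.
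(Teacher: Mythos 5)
Your proposal is correct and is essentially the paper's own proof: the paper likewise deduces $\|C_\Phi\|=1$ from the isometry hypothesis, plugs this into the lower bound of Theorem \ref{theorem lower bound for compo op}, and rearranges to $1-|\Phi(0)|^2\geq 1$, hence $\Phi(0)=0$. Your explicit remark that $2+\alpha>0$ is needed to invert the inequality is a point the paper glosses over, but it is the same argument.
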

\begin{proof}
Let $C_{\Phi}$ be an isometry. Then, $||C_{\Phi}||=1$ and from equation \eqref{lower bound for norm of comp op} it follows that
$$\frac{1}{\sqrt{(1-|\Phi(0)|^2)^{2+\alpha}}}\leq 1 \Rightarrow   1-|\Phi(0)|^2 \geq 1. $$
Thus, $\Phi(0)=0.$
\end{proof}
As a direct consequence of Corollary \ref{if phi zero is zero then comp op norm one} and  Corollary \ref{comp op norm one implies phi zero is zero}, the condition for the unit norm of composition operator is obtained.
\begin{proposition}
Let $\Phi$ be an analytic self-map on $\mathbb{D}.$ Then for composition operator $C_{\Phi}$ on $\mathcal{A}_{\alpha}^2(H),$  $||C_{\Phi}||=1$ if and only if  $\Phi(0)=0.$ 
\end{proposition}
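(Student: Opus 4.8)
The plan is to prove the two implications separately, each of which is already packaged as one of the norm estimates established earlier, so the argument is essentially a bookkeeping combination.

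For the sufficiency, that $\Phi(0)=0$ forces $||C_{\Phi}||=1$, I would simply invoke Corollary \ref{if phi zero is zero then comp op norm one}, which asserts exactly this. Its proof already supplies both halves of the equality: specializing the upper estimate of Theorem \ref{upper bound norm of comp op in term of induced fun} to $\Phi(0)=0$ collapses the factor $\left(\tfrac{1+|\Phi(0)|}{1-|\Phi(0)|}\right)^{2+\alpha}$ to $1$, giving $||C_{\Phi}||\le 1$, while evaluating $C_{\Phi}$ on a nonzero constant function $g\equiv e_{n}$ yields the matching lower bound $||C_{\Phi}||\ge 1$.

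For the necessity, that $||C_{\Phi}||=1$ forces $\Phi(0)=0$, I would appeal to the lower bound of Theorem \ref{theorem lower bound for compo op}, namely $\tfrac{1}{\sqrt{(1-|\Phi(0)|^2)^{2+\alpha}}}\le ||C_{\Phi}||$. Substituting $||C_{\Phi}||=1$ gives $(1-|\Phi(0)|^2)^{2+\alpha}\ge 1$. Since $\alpha>-1$ the exponent $2+\alpha$ is strictly positive, and since $1-|\Phi(0)|^2\le 1$ always, raising a number in $(0,1]$ to a positive power produces a value $\le 1$, with equality only when the base equals $1$. The two inequalities therefore force $1-|\Phi(0)|^2=1$, i.e.\ $\Phi(0)=0$. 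This is precisely the computation in the proof of Corollary \ref{comp op norm one implies phi zero is zero}.

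There is no real obstacle here, since the statement is a direct synthesis of the two-sided estimate already assembled. The single point meriting a comment is that Corollary \ref{comp op norm one implies phi zero is zero} is phrased under the stronger hypothesis that $C_{\Phi}$ is an isometry, whereas the necessity direction needs only $||C_{\Phi}||=1$; inspecting that proof shows the isometry assumption is used solely to conclude $||C_{\Phi}||=1$, so the argument transfers verbatim to our weaker hypothesis and the equivalence follows.
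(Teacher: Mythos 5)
Your proof is correct and takes essentially the same route as the paper, which obtains the proposition as a direct consequence of Corollary \ref{if phi zero is zero then comp op norm one} and Corollary \ref{comp op norm one implies phi zero is zero}. Your closing remark is exactly right and even slightly more careful than the paper's bare citation: Corollary \ref{comp op norm one implies phi zero is zero} is stated for isometries, but its proof uses only $||C_{\Phi}||=1$ via the lower bound of Theorem \ref{theorem lower bound for compo op}, so the necessity direction goes through under the weaker hypothesis just as you argue.
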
 

The following Lemma will be instrumental in several results in subsequent sections. 
\begin{lemma} \label{adjoint of composition when phi is alpha z}
Let $\Phi$ and $\Psi$ be analytic self-map on $\mathbb{D}$ such that $\Phi(z)=\lambda z$ and $\Psi(z)=\bar{\lambda} z$ with $|\lambda|\leq 1.$ Then, the adjoint of composition operator $C_{\Phi}$ on $\mathcal{A}_{\alpha}^2(H)$ is a composition operator $C_{\Psi},$ that is, $ C_{\Phi}^{*}=C_{\Psi}.$
\end{lemma}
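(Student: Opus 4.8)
The plan is to verify that $C_{\Phi}^{*}$ and $C_{\Psi}$ agree on the reproducing kernels $K_{z}^{j}$, which span a dense subspace of $\mathcal{A}_{\alpha}^2(H)$ by Lemma \ref{lemma linear span of kernel}; since both operators are bounded, this forces $C_{\Phi}^{*}=C_{\Psi}$. First I would note that $\Phi(0)=0=\Psi(0)$ and $|\lambda|\leq 1$ guarantee that $\Phi$ and $\Psi$ are genuine self-maps of $\mathbb{D}$ (indeed $|\lambda z|=|\lambda||z|<1$ and likewise for $\bar{\lambda}z$), so by Theorem \ref{upper bound norm of comp op in term of induced fun} both $C_{\Phi}$ and $C_{\Psi}$ are bounded; in particular $C_{\Phi}^{*}$ is a well-defined bounded operator, and to establish the claimed identity it suffices to match $C_{\Phi}^{*}$ and $C_{\Psi}$ on a dense set.

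Next I would evaluate both operators on the kernels in two ways. On one hand, equation \eqref{adjoint of comp on kernel} gives directly
$$C_{\Phi}^{*}K_{z}^{j}=K_{\Phi(z)}^{j}=K_{\lambda z}^{j}.$$
On the other hand, computing $C_{\Psi}K_{z}^{j}$ pointwise from the formula in Lemma \ref{reproducing kernel defined},
$$\bigl(C_{\Psi}K_{z}^{j}\bigr)(w)=K_{z}^{j}(\Psi(w))=K_{z}^{j}(\bar{\lambda}w)=\frac{e_{j}}{(1-\bar{\lambda}w\,\bar{z})^{2+\alpha}}.$$
The key algebraic observation is that $\bar{\lambda}w\,\bar{z}=w\,\overline{\lambda z}$, so the right-hand side is exactly $K_{\lambda z}^{j}(w)$. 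Hence $C_{\Psi}K_{z}^{j}=K_{\lambda z}^{j}=C_{\Phi}^{*}K_{z}^{j}$ for every $z\in\mathbb{D}$ and $j\in\mathbb{N}_{0}$.

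Finally, since $C_{\Phi}^{*}$ and $C_{\Psi}$ are bounded and agree on $Span\{K_{z}^{j}:z\in\mathbb{D},\,j\in\mathbb{N}_{0}\}$, which is dense by Lemma \ref{lemma linear span of kernel}, a standard continuity argument shows they coincide on all of $\mathcal{A}_{\alpha}^2(H)$, giving $C_{\Phi}^{*}=C_{\Psi}$.

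I do not expect a genuine obstacle here: once the kernel formula and equation \eqref{adjoint of comp on kernel} are in hand the computation is routine. The only point requiring care is the conjugation bookkeeping — matching $\bar{\lambda}w\,\bar{z}$ with $w\,\overline{\lambda z}$ — which is precisely the reason the adjoint of $C_{\Phi}$ is induced by $\bar{\lambda}z$ rather than $\lambda z$. As a sanity check (and an equally short alternative route), one may instead work on the orthonormal basis of Proposition \ref{orthonormal basis for vector val weig Bergman sp}: since $C_{\Phi}\mathcal{E}_{m,n}(z)=d_{m\alpha}(\lambda z)^{m}e_{n}=\lambda^{m}\mathcal{E}_{m,n}(z)$, each $\mathcal{E}_{m,n}$ is an eigenvector with eigenvalue $\lambda^{m}$, so $C_{\Phi}^{*}\mathcal{E}_{m,n}=\bar{\lambda}^{m}\mathcal{E}_{m,n}=C_{\Psi}\mathcal{E}_{m,n}$, and the conclusion again follows by boundedness.
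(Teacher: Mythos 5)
Your proposal is correct and follows essentially the same route as the paper: using equation \eqref{adjoint of comp on kernel} to get $C_{\Phi}^{*}K_{z}^{j}=K_{\Phi(z)}^{j}$, matching this pointwise with $(C_{\Psi}K_{z}^{j})(w)$ via the conjugation identity $\bar{\lambda}w\,\bar{z}=w\,\overline{\lambda z}$, and concluding by the density of $Span\{K_{z}^{j}\}$ from Lemma \ref{lemma linear span of kernel}. Your added remarks on boundedness and the alternative diagonalization argument on the basis $\{\mathcal{E}_{m,n}\}$ (which is also valid, since an operator diagonal in an orthonormal basis has adjoint given by conjugating the eigenvalues) are sound refinements beyond what the paper records, but the core argument is the same.
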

\begin{proof} 
In view of Theorem \ref{lemma linear span of kernel}, to prove the required result, it is sufficient to prove that for $j \in \mathbb{N}_{0}$ and $z\in \mathbb{D}$
$$C_{\Phi}^{*}K_{z}^{j}=C_{\Psi}K_{z}^{j}.$$
Now, for $j \in \mathbb{N}_{0}$ and $z\in \mathbb{D}$
$$K_{\Phi(z)}^{j}(w)=\frac{e_{j}}{(1-w\, \overline{\Phi(z)})^{2+\alpha}}=\frac{e_{j}}{(1-(\bar{\lambda}w)\bar{z} )^2}=\frac{e_{j}}{(1-\Psi(w)\bar{z} )^{2+\alpha}}=K_{z}^{j}(\Psi(w)).$$
Combining above equation with equation \eqref{adjoint of comp on kernel} we get
$$(C_{\Phi}^{*}K_{z}^{j})(w)=K_{\Phi(z)}^{j}(w)=K_{z}^{j}(\Psi(w))=(C_{\Psi}K_{z}^{j})(w) \;\, \text{for all} \; j \in \mathbb{N}_{0} \; \text{and} \; z,w\in \mathbb{D}.  $$
Thus, $C_{\Phi}^{*}K_{z}^{j}=C_{\Psi}K_{z}^{j}$ and hence, $ C_{\Phi}^{*}=C_{\Psi}.$
\end{proof}
\begin{theorem} \label{if phi is rotation then comp op is isometry} 
Let $\Phi$ be an analytic self-map on $\mathbb{D}$ such that it is a rotation, that is, $\Phi(z)=\lambda z$ and $|\lambda|=1.$ Then, $C_{\Phi}$ is an isometry on $\mathcal{A}_{\alpha}^2(H).$ 
\end{theorem}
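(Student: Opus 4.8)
The plan is to verify directly that $C_{\Phi}$ preserves the $\mathcal{A}_{\alpha}^2(H)$-norm, since $\|C_{\Phi}f\|_{\mathcal{A}_{\alpha}^2(H)}=\|f\|_{\mathcal{A}_{\alpha}^2(H)}$ for all $f$ is equivalent to $C_{\Phi}^{*}C_{\Phi}=I$, i.e. to $C_{\Phi}$ being an isometry. First I would take an arbitrary $f(z)=\sum_{n=0}^{\infty}y_n z^n\in\mathcal{A}_{\alpha}^2(H)$ and record how $C_{\Phi}$ acts on its Taylor coefficients. Since $\Phi(z)=\lambda z$, we have $(C_{\Phi}f)(z)=f(\lambda z)=\sum_{n=0}^{\infty}(\lambda^{n}y_n)z^{n}$, so the $n$-th coefficient of $C_{\Phi}f$ is $\lambda^{n}y_n$.

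Second, I would feed these coefficients into the coefficient form of the norm, equation \eqref{norm in terms of dm alpha}, to obtain
$$\|C_{\Phi}f\|_{\mathcal{A}_{\alpha}^2(H)}^2=\sum_{n=0}^{\infty}\frac{1}{d_{n\alpha}^2}\,\|\lambda^{n}y_n\|^2=\sum_{n=0}^{\infty}\frac{1}{d_{n\alpha}^2}\,|\lambda|^{2n}\,\|y_n\|^2.$$
Because $|\lambda|=1$, every factor $|\lambda|^{2n}$ equals $1$, so the right-hand side collapses to $\sum_{n=0}^{\infty}\frac{1}{d_{n\alpha}^2}\|y_n\|^2=\|f\|_{\mathcal{A}_{\alpha}^2(H)}^2$. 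This shows in particular that $C_{\Phi}f\in\mathcal{A}_{\alpha}^2(H)$ whenever $f$ is, and that the norm is exactly preserved, which is the claim.

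There is essentially no hard step in this argument; the only points needing a remark are that multiplication by $\lambda^{n}$ is termwise (so the coefficients transform as stated) and that $|\lambda|^{2n}=1$ follows at once from $|\lambda|=1$. The genuine content is just the choice of route, and I expect the write-up to be short.

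As an alternative that leans on the preceding machinery, one could avoid the coefficient computation altogether by invoking Lemma \ref{adjoint of composition when phi is alpha z}: taking $\Psi(z)=\bar{\lambda}z$, that lemma yields $C_{\Phi}^{*}=C_{\Psi}$, and then $C_{\Phi}^{*}C_{\Phi}=C_{\Psi}C_{\Phi}=C_{\Phi\circ\Psi}$, where $(\Phi\circ\Psi)(z)=\lambda\bar{\lambda}z=|\lambda|^2 z=z$. Hence $C_{\Phi}^{*}C_{\Phi}=C_{\mathrm{id}}=I$, so $C_{\Phi}$ is an isometry. I would present the direct series argument as the main proof and may mention this alternative as a remark.
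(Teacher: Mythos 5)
Your main argument is exactly the paper's proof: expand $f$ in Taylor coefficients, note $(C_{\Phi}f)(z)=\sum_{n}\lambda^{n}y_{n}z^{n}$, and apply the coefficient norm formula \eqref{norm in terms of dm alpha} with $|\lambda|^{2n}=1$. The alternative via Lemma \ref{adjoint of composition when phi is alpha z} is a valid bonus observation, but the proposal is correct and essentially identical to the paper's route.
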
 
\begin{proof}
Let $f(z)=\sum_{n=0}^{\infty} y_n z^n \in \mathcal{A}_{\alpha}^2(H).$ Then, 
$$(C_{\Phi}f)(z)= f(\Phi(z))=f(\lambda z)=\sum_{n=0}^{\infty} \lambda^n y_n z^n.$$
So
$$||C_{\Phi}f||_{\mathcal{A}_{\alpha}^2(H)}^2=\sum_{n=0}^{\infty} \frac{1}{d^{2}_{n\alpha}} |\lambda|^{2n}||y_n||^2=\sum_{n=0}^{\infty} \frac{1}{d^{2}_{n\alpha}}||y_n||^2=||f||_{\mathcal{A}_{\alpha}^2(H)}^2$$
Thus, $C_{\Phi}$ is an isometry.
\end{proof} 

Following result characterized co-isometric composition operators on $\mathcal{A}_{\alpha}^2(H).$ 
\begin{theorem} \label{com op is coisometry iff phi is rotation}
Let $\Phi$ be an analytic self-map on $\mathbb{D}.$ Then, $C_{\Phi}$ is co-isometry on $\mathcal{A}_{\alpha}^2(H)$ if and only if $\Phi$ is a rotation, that is, $\Phi(z)=\lambda z$ for some $\lambda \in \mathbb{C}$ with $|\lambda|=1.$ 
\end{theorem}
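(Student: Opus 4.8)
The plan is to prove both implications by exploiting the action of $C_{\Phi}^{*}$ on the reproducing kernels, together with the two ingredients already established in Lemma \ref{adjoint of composition when phi is alpha z} and Theorem \ref{if phi is rotation then comp op is isometry}.

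For the backward implication, I would suppose $\Phi(z)=\lambda z$ with $|\lambda|=1$ and observe that it is essentially immediate. By Lemma \ref{adjoint of composition when phi is alpha z} (applied with this $\lambda$, which satisfies $|\lambda|\leq 1$) we have $C_{\Phi}^{*}=C_{\Psi}$, where $\Psi(z)=\bar{\lambda}z$. Since $|\bar{\lambda}|=1$, the map $\Psi$ is itself a rotation, so Theorem \ref{if phi is rotation then comp op is isometry} shows that $C_{\Psi}=C_{\Phi}^{*}$ is an isometry. By the definition recalled at the start of the section, this says precisely that $C_{\Phi}$ is a co-isometry.

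For the forward (and main) implication, I would assume $C_{\Phi}$ is a co-isometry, so that $C_{\Phi}^{*}$ is an isometry and in particular preserves the norm of every vector, hence of every reproducing kernel. Combining this with equation \eqref{adjoint of comp on kernel}, namely $C_{\Phi}^{*}K_{z}^{j}=K_{\Phi(z)}^{j}$, gives $\|K_{\Phi(z)}^{j}\|=\|K_{z}^{j}\|$ for every $z\in\mathbb{D}$ and $j\in\mathbb{N}_{0}$. Substituting the explicit kernel norm from equation \eqref{norm of reproducing kernel} yields
$$\frac{1}{\sqrt{(1-|\Phi(z)|^2)^{2+\alpha}}}=\frac{1}{\sqrt{(1-|z|^2)^{2+\alpha}}},$$
which after simplification forces $|\Phi(z)|=|z|$ for all $z\in\mathbb{D}$.

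It then remains to upgrade this modulus identity to the rigid form $\Phi(z)=\lambda z$. Taking $z=0$ gives $\Phi(0)=0$, so $\Phi$ is an analytic self-map of $\mathbb{D}$ fixing the origin for which $|\Phi(z)|=|z|$ holds at every point, in particular at some nonzero point. The equality case of the Schwarz lemma then forces $\Phi(z)=\lambda z$ for a constant $\lambda$ with $|\lambda|=1$, i.e. $\Phi$ is a rotation. The only real content lies in the passage from $\|C_{\Phi}^{*}K_{z}^{j}\|=\|K_{z}^{j}\|$ to $|\Phi(z)|=|z|$ and the subsequent appeal to Schwarz; I expect no genuine obstacle, since the kernel-norm formula and the rigidity statement of the Schwarz lemma do all the work.
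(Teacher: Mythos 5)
Your proposal is correct and follows essentially the same route as the paper's own proof: the forward direction via $C_{\Phi}^{*}K_{z}^{j}=K_{\Phi(z)}^{j}$ and the kernel-norm formula \eqref{norm of reproducing kernel} to get $|\Phi(z)|=|z|$, and the backward direction via Lemma \ref{adjoint of composition when phi is alpha z} combined with Theorem \ref{if phi is rotation then comp op is isometry}. The only difference is cosmetic: you spell out the Schwarz-lemma rigidity argument (using $\Phi(0)=0$ and equality at a nonzero point) that the paper leaves implicit in its ``hence $\Phi(z)=\lambda z$'' step, which is a welcome clarification rather than a deviation.
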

\begin{proof}
Let $C_{\Phi}$ be a co-isometry. Then, $||C_{\Phi}^{*}f||_{\mathcal{A}_{\alpha}^2(H)}=||f||_{\mathcal{A}_{\alpha}^2(H)}$ for all $f \in \mathcal{A}_{\alpha}^2(H).$
This implies for $j \in \mathbb{N}_{0}$ and $z\in \mathbb{D}$ 
\begin{align*}
||C_{\Phi}^{*}K_{z}^{j}||_{\mathcal{A}_{\alpha}^2(H)} &=||K_{z}^{j}||_{\mathcal{A}_{\alpha}^2(H)} \\
\Rightarrow ||K_{\Phi(z)}^{j}||_{\mathcal{A}_{\alpha}^2(H)} &=||K_{z}^{j}||_{\mathcal{A}_{\alpha}^2(H)} \\
\Rightarrow \frac{1}{\sqrt{(1-|\Phi(z)|^2)^{2+\alpha}}} &=\frac{1}{\sqrt{(1-|z|^2)^{2+\alpha}}} 
\end{align*}
Thus, $|\Phi(z)|=|z|$ for all $z\in \mathbb{D}$ and hence, $\Phi(z)=\lambda z$ for some $\lambda \in \mathbb{C}$ with $|\lambda|=1.$\\
Conversely, let $\Phi(z)=\lambda z$ with $|\lambda|=1.$ Then, by Lemma \ref{adjoint of composition when phi is alpha z} $C_{\Phi}^{*}=C_{\Psi}$ where $\Psi(z)=\overline{\lambda} z.$ Further, by Theorem \ref{if phi is rotation then comp op is isometry} it follows that $C_{\Phi}^{*}$ is an isometry. Hence, $C_{\Phi}$ is co-isometry. 
\end{proof}
\begin{theorem} \label{com op is unitary iff phi is rotation}
Let $\Phi$ be an analytic self-map on $\mathbb{D}.$ Then, $C_{\Phi}$ is unitary on $\mathcal{A}_{\alpha}^2(H)$ if and only if $\Phi(z)=\lambda z$ where $|\lambda|=1.$
\end{theorem}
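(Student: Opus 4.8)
The plan is to deduce this characterization directly from the definition of a unitary operator combined with the two preceding theorems, so that almost no new work is required. Recall that $C_{\Phi}$ is unitary exactly when both $C_{\Phi}$ and $C_{\Phi}^{*}$ are isometries, equivalently when $C_{\Phi}$ is simultaneously an isometry ($C_{\Phi}^{*}C_{\Phi}=I$) and a co-isometry ($C_{\Phi}C_{\Phi}^{*}=I$). Both implications of the stated equivalence will then reduce to facts already established in this section.

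For the forward direction I would argue that if $C_{\Phi}$ is unitary, then in particular $C_{\Phi}C_{\Phi}^{*}=I$, so $C_{\Phi}$ is a co-isometry. Theorem \ref{com op is coisometry iff phi is rotation} then immediately yields that $\Phi(z)=\lambda z$ for some $\lambda\in\mathbb{C}$ with $|\lambda|=1$, which is exactly the desired conclusion.

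For the converse, suppose $\Phi(z)=\lambda z$ with $|\lambda|=1$. Theorem \ref{if phi is rotation then comp op is isometry} gives that $C_{\Phi}$ is an isometry, while Theorem \ref{com op is coisometry iff phi is rotation} gives that $C_{\Phi}$ is a co-isometry; being both, $C_{\Phi}$ is unitary. Alternatively, and perhaps more transparently, I would invoke Lemma \ref{adjoint of composition when phi is alpha z} to write $C_{\Phi}^{*}=C_{\Psi}$ with $\Psi(z)=\bar{\lambda}z$, and then verify the two operator identities by direct computation: since $(\Phi\circ\Psi)(z)=\lambda\bar{\lambda}z=|\lambda|^{2}z=z$ and likewise $(\Psi\circ\Phi)(z)=z$, both composite maps collapse to the identity, so $C_{\Phi}^{*}C_{\Phi}=C_{\Psi}C_{\Phi}=C_{\Phi\circ\Psi}=I$ and $C_{\Phi}C_{\Phi}^{*}=C_{\Psi\circ\Phi}=I$.

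There is essentially no hard step here: the substance lies entirely in the earlier theorems, and the only point demanding care is the order-reversing nature of composition when multiplying composition operators, namely that $C_{\Psi}C_{\Phi}=C_{\Phi\circ\Psi}$. Once this bookkeeping is in place, the equivalence follows at once.
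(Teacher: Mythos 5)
Your proposal is correct and follows essentially the same route as the paper: the forward direction reduces unitarity to co-isometry and invokes Theorem \ref{com op is coisometry iff phi is rotation}, and the converse combines Theorem \ref{if phi is rotation then comp op is isometry} (isometry) with Theorem \ref{com op is coisometry iff phi is rotation} (co-isometry), exactly as the paper does. Your alternative direct verification of $C_{\Phi}^{*}C_{\Phi}=C_{\Phi}C_{\Phi}^{*}=I$ via Lemma \ref{adjoint of composition when phi is alpha z} and the identity $C_{\Psi}C_{\Phi}=C_{\Phi\circ\Psi}$ is a valid bonus, but the substance of the argument coincides with the paper's proof.
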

\begin{proof}
Let $C_{\Phi}$ be unitary. Then, $C_{\Phi}$ is co-isometry and by Theorem \ref{com op is coisometry iff phi is rotation}, $\Phi(z)=\lambda z$ with $|\lambda|=1.$ \\
Conversely, let $\Phi(z)=\lambda z$ with $|\lambda|=1.$ Then, by Theorem \ref{com op is coisometry iff phi is rotation} $C_{\Phi}$ is co-isometry and by Theorem \ref{if phi is rotation then comp op is isometry}, $C_{\Phi}$ is isometry. Hence, $C_{\Phi}$ is unitary.
\end{proof}
The following result directly follows from Theorem \ref{com op is coisometry iff phi is rotation} and Theorem \ref{com op is unitary iff phi is rotation}. 
\begin{corollary}
Let $\Phi$ be an analytic self-map on $\mathbb{D}.$ Then, $C_{\Phi}$ is co-isometry on $\mathcal{A}_{\alpha}^2(H)$ if and only if it is unitary.  
\end{corollary}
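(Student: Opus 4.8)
The plan is to observe that the two preceding theorems characterize co-isometry and unitarity of $C_{\Phi}$ by the \emph{identical} function-theoretic condition on $\Phi$, namely that $\Phi$ is a rotation $\Phi(z)=\lambda z$ with $|\lambda|=1$. Once this is noticed, the stated equivalence is essentially a tautology and requires only chaining the two theorems together. So I would not attempt any fresh analysis on $\mathcal{A}_{\alpha}^2(H)$; instead I would quote the earlier results verbatim.

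First I would dispatch the easy implication. If $C_{\Phi}$ is unitary, then by definition both $C_{\Phi}$ and $C_{\Phi}^{*}$ are isometries, so in particular $C_{\Phi}^{*}$ is an isometry and $C_{\Phi}$ is a co-isometry; no appeal to rotations is even needed here. For the converse implication I would route through the common characterizing condition: if $C_{\Phi}$ is a co-isometry, then Theorem \ref{com op is coisometry iff phi is rotation} forces $\Phi(z)=\lambda z$ for some $\lambda\in\mathbb{C}$ with $|\lambda|=1$; feeding this rotation back into the converse direction of Theorem \ref{com op is unitary iff phi is rotation} yields that $C_{\Phi}$ is unitary. Combining the two directions gives the claimed equivalence.

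I expect no genuine obstacle in this argument. All of the substantive work—computing the action of $C_{\Phi}^{*}$ on the reproducing kernels via equation \eqref{adjoint of comp on kernel}, deducing $|\Phi(z)|=|z|$ and hence the rotation form, and verifying isometry of $C_{\Phi}$ for a rotation through the diagonal action on the coefficient expansion—has already been carried out in Theorems \ref{com op is coisometry iff phi is rotation} and \ref{com op is unitary iff phi is rotation}. The corollary merely records that their hypotheses on $\Phi$ coincide, so the proof reduces to a two-line logical deduction from results I am permitted to assume.
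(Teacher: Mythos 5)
Your proposal is correct and matches the paper's approach: the paper gives no separate proof, stating only that the corollary follows directly from Theorem \ref{com op is coisometry iff phi is rotation} and Theorem \ref{com op is unitary iff phi is rotation}, which is exactly the chaining you describe (your shortcut of getting unitary $\Rightarrow$ co-isometry straight from the definition is a harmless simplification of the same idea).
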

\section{Hermitian and Normal Composition Operators}
In 1999, Sharma and Bhanu \cite{Imp Vector valued Hardy} obtained complete characterization for some composition operators on vector-valued Hardy space like normal and Hermitian composition operators. With this motivation we obtain the necessary and sufficient condition for adjoint of composition operator on vector-valued weighted Bergman space, $\mathcal{A}_{\alpha}^2(H),$ to be some composition operator. Further, we also have obtained characterization for Hermitian and normal composition operator on $\mathcal{A}_{\alpha}^2(H)$. \\
Recall that a bounded linear operator $T$ on a Hilbert space $Y$ is Hermitian if $T^{*}=T$ and $T$ is normal if $T^{*}T=TT^{*}.$

\begin{theorem}\label{necessary and sufficient condition for adjoint of comp op}
Let $\Phi$ be an analytic self-map on $\mathbb{D}.$ Then, the adjoint of composition operator $C_{\Phi}$ on $\mathcal{A}_{\alpha}^2(H)$ is some composition operator if and only if $\Phi(z)=\lambda z$ with $|\lambda|\leq 1.$
\end{theorem}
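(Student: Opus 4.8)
The plan is to treat the two implications separately, with essentially all of the content lying in the forward direction.

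For sufficiency (the ``if'' direction) I would simply invoke Lemma~\ref{adjoint of composition when phi is alpha z}: if $\Phi(z)=\lambda z$ with $|\lambda|\le 1$, then $C_{\Phi}^{*}=C_{\Psi}$ where $\Psi(z)=\bar{\lambda}z$ is itself an analytic self-map of $\mathbb{D}$, so the adjoint is a composition operator. Nothing further is needed here.

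For necessity, suppose $C_{\Phi}^{*}=C_{\Psi}$ for some analytic self-map $\Psi$ of $\mathbb{D}$. The strategy is to test this operator identity against the reproducing kernels and exploit their explicit form from Lemma~\ref{reproducing kernel defined}. On one hand, equation \eqref{adjoint of comp on kernel} gives $C_{\Phi}^{*}K_{z}^{j}=K_{\Phi(z)}^{j}$. On the other hand, since $C_{\Psi}f=f\circ\Psi$, evaluating at $w$ yields $(C_{\Psi}K_{z}^{j})(w)=K_{z}^{j}(\Psi(w))$. Equating the two and substituting the kernel formula from Lemma~\ref{reproducing kernel defined} gives, for all $z,w\in\mathbb{D}$,
\[
\frac{1}{(1-w\,\overline{\Phi(z)})^{2+\alpha}}=\frac{1}{(1-\Psi(w)\bar{z})^{2+\alpha}}.
\]

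From this functional equation I would extract the linear form of $\Phi$. The step requiring care is that $2+\alpha$ need not be an integer, so one cannot blindly ``take $(2+\alpha)$-th roots'' of both sides. The clean way around this: since $|w\bar{z}|<1$ forces $\operatorname{Re}(1-w\bar{z})>0$, both sides are genuine analytic functions of $w$ on $\mathbb{D}$ defined by the principal branch, so instead of root extraction I would compare their power-series expansions in $w$ about $w=0$ (equivalently, compute $\tfrac{d}{dw}\log$ and evaluate at $w=0$). The zeroth-order terms force $\Psi(0)=0$, and the first-order terms then give $(2+\alpha)\overline{\Phi(z)}=(2+\alpha)\Psi'(0)\bar{z}$, whence $\Phi(z)=\overline{\Psi'(0)}\,z$. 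Writing $\lambda=\overline{\Psi'(0)}$, this is $\Phi(z)=\lambda z$, and since $\Phi$ maps $\mathbb{D}$ into $\mathbb{D}$ we must have $|\lambda|\le 1$, completing the proof.

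The main obstacle is precisely this handling of the fractional exponent $2+\alpha$: recognizing that the principal branch renders both sides analytic in $w$ and that matching the low-order Taylor coefficients in $w$ is both legitimate and sufficient, rather than attempting a direct root extraction that would introduce branch-ambiguity issues. Everything else is a routine comparison of the reproducing-kernel identities already established in the excerpt.
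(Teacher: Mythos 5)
Your proposal is correct, but it follows a genuinely different route from the paper. The paper's proof of necessity is coefficient-by-coefficient and computational: it writes $\Phi(z)=\sum a_nz^n$, $\Psi(z)=\sum b_nz^n$, first tests against $K_0^j$ to force $\Phi(0)=\Psi(0)=0$, then evaluates the inner products $\langle C_{\Psi}\mathcal{E}_{1,1}, \mathcal{E}_{m,1}\rangle_{\mathcal{A}_{\alpha}^2(H)}$ two ways (once by a polar-coordinate integral to extract $b_m$, once via $C_{\Phi}=C_{\Psi}^{*}$ to get $\overline{a_1}\delta_{m1}$), concluding $\Psi(z)=\overline{a_1}z$, and only then recovers $\Phi(z)=a_1z$ by applying Lemma \ref{adjoint of composition when phi is alpha z} to $\Psi$. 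You instead work entirely with the kernel functional equation
\begin{equation*}
\frac{1}{(1-w\,\overline{\Phi(z)})^{2+\alpha}}=\frac{1}{(1-\Psi(w)\bar{z})^{2+\alpha}},
\end{equation*}
obtained from \eqref{adjoint of comp on kernel} together with $(C_{\Psi}K_{z}^{j})(w)=K_{z}^{j}(\Psi(w))$, and you extract $\Psi(0)=0$ and $\Phi(z)=\overline{\Psi'(0)}z$ by matching the zeroth- and first-order Taylor coefficients in $w$ (equivalently the logarithmic derivative at $w=0$). This is legitimate: both sides are the principal-branch powers, since the kernel of Lemma \ref{reproducing kernel defined} is by construction the binomial series $\sum_m d_{m\alpha}^2\bar{z}^m w^m$, so the coefficient comparison you describe is valid, and your attention to the branch issue for non-integer $2+\alpha$ is well placed (it is glossed over in the paper). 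What each approach buys: yours is shorter, avoids all explicit integration, needs only one direction of the adjoint identity ($C_{\Phi}^{*}=C_{\Psi}$ rather than also invoking $C_{\Phi}=C_{\Psi}^{*}$), and yields the linearity of $\Phi$ directly; the paper's argument is more elementary in its toolkit (orthonormal-basis expansions and direct integrals) and determines the full coefficient sequence of $\Psi$ along the way, which makes the symmetry $\Psi(z)=\overline{a_1}z$, $\Phi(z)=a_1z$ explicit. Both proofs handle sufficiency identically via Lemma \ref{adjoint of composition when phi is alpha z}.
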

\begin{proof} Let $C_{\Phi}^{*}=C_{\Psi}$ for some analytic self-map $\Psi$ on $\mathbb{D}.$ Let  $\Phi(z)=\sum_{n=0}^{\infty} a_{n} z^{n}$ and $\Psi(z)=\sum_{n=0}^{\infty} b_{n} z^{n}$ where $a_{n}$ and $b_{n} \in \mathbb{C}$ for $n\in \mathbb{N}_{0}.$ Since $C_{\Phi}^{*}=C_{\Psi}$ and $(C_{\Psi}K_{0}^{j})(w)=K_{0}^{j}(\Psi(w))=e_{j}$ for all $w\in \mathbb{D},$ therefore, 
\begin{align*}
||C_{\Phi}^{*}K_{0}^{j}||_{\mathcal{A}_{\alpha}^2(H)}^2 &=||C_{\Psi}K_{0}^{j}||_{\mathcal{A}_{\alpha}^2(H)}^2 \\
\Rightarrow || K_{\Phi(0)}^{j}||_{\mathcal{A}_{\alpha}^2(H)}^2 &=||C_{\Psi}K_{0}^{j}||_{\mathcal{A}_{\alpha}^2(H)}^2 \\
\Rightarrow \frac{1}{(1-|\Phi(0)|^2)^{2+\alpha}} &=1.  
\end{align*} 
Thus, $\Phi(0)=0,$ that is, $a_{0}=0.$ Similarly, we can prove $b_{0}=0$ using the fact that $C_{\Phi}=C_{\Psi}^{*}.$ So $\Phi(z)=\sum_{n=1}^{\infty} a_{n} z^{n}$ and $\Psi(z)=\sum_{n=1}^{\infty} b_{n} z^{n}.$ Now
$$C_{\Psi}\mathcal{E}_{1,1}(z)=\mathcal{E}_{1,1}(\Psi(z))=\sqrt{2+\alpha} \,\Psi(z) e_{1} \; \text{and} \;  \frac{1}{d_{m\alpha}}= \sqrt{\frac{m! \, \Gamma(2+\alpha)}{\Gamma(m+2+\alpha)} }.$$
For $m \geq 1$ 
\begin{align}
 \langle \frac{1}{\sqrt{2+\alpha}} C_{\Psi} \mathcal{E}_{1,1}, d_{m\alpha} \mathcal{E}_{m,1} \rangle_{\mathcal{A}_{\alpha}^2(H)} =&\frac{1}{\sqrt{2+\alpha}} d_{m\alpha} \langle C_{\Psi} \mathcal{E}_{1,1}, \mathcal{E}_{m,1} \rangle_{\mathcal{A}_{\alpha}^2(H)} \notag \\
                         &=\frac{1}{\sqrt{2+\alpha}} d_{m\alpha} \langle  \mathcal{E}_{1,1} \circ \Psi, \mathcal{E}_{m,1}\rangle_{\mathcal{A}_{\alpha}^2(H)} \label{adjoint of comp op is comp op eq 1}
\end{align}
Using polar coordinates form it is easy to see that
\begin{align}
\int_{\mathbb{D}} \langle  \mathcal{E}_{1,1}( \Psi(z)), \mathcal{E}_{m,1}(z) \rangle \,   dA_{\alpha}(z)&= \sqrt{2+\alpha} \, d_{m\alpha}\, \frac{1}{\pi} \int_{0}^{1} \int_{0}^{2\pi} \langle  \Psi(re^{i\theta})e_{1},(re^{i\theta})^{m} e_{1} \rangle \,(\alpha+1)(1-r^2)^{\alpha}\,  r dr d\theta \notag\\
                          &= \sqrt{2+\alpha} \, d_{m\alpha}\,\, \frac{1}{\pi} \sum_{n=1}^{\infty} \int_{0}^{1} \int_{0}^{2\pi}   b_{n} r^{n+m} e^{i(n-m)\theta} \langle e_{1},e_{1} \rangle \, (\alpha+1)(1-r^2)^{\alpha}\,  r dr d\theta  \notag\\
                          &= \sqrt{2+\alpha} \, d_{m\alpha}\,\, \frac{1}{d^2_{m\alpha}}b_{m}.\label{adjoint of comp op is comp op eq 2} 
\end{align}
Combining \eqref{adjoint of comp op is comp op eq 1} and \eqref{adjoint of comp op is comp op eq 2}, for $m \geq 1$ we get 
\begin{equation} \label{adjoint of comp op is comp op eq 3} 
b_{m}=\langle \frac{1}{\sqrt{2+\alpha}} C_{\Psi} \mathcal{E}_{1,1}, d_{m\alpha} \mathcal{E}_{m,1} \rangle_{\mathcal{A}_{\alpha}^2(H)}.
\end{equation}
Also, for $m \geq 1$
\begin{align*}
\langle \frac{1}{\sqrt{2+\alpha}} C_{\Psi} \mathcal{E}_{1,1}, d_{m\alpha} \mathcal{E}_{m,1} \rangle_{\mathcal{A}_{\alpha}^2(H)} =&\frac{1}{\sqrt{2+\alpha}} d_{m\alpha} \langle  \mathcal{E}_{1,1}, C_{\Psi}^{*} \mathcal{E}_{m,1} \rangle_{\mathcal{A}_{\alpha}^2(H)}. 
\end{align*}
Since $C_{\Phi}^{*}=C_{\Psi},$ therefore, $C_{\Phi}=C_{\Psi}^{*}.$ Using this in above equation we get 
\begin{align*}
\langle \frac{1}{\sqrt{2+\alpha}} C_{\Psi} \mathcal{E}_{1,1}, d_{m\alpha} \mathcal{E}_{m,1} \rangle_{\mathcal{A}_{\alpha}^2(H)} &=\frac{1}{\sqrt{2+\alpha}} d_{m\alpha} \langle  \mathcal{E}_{1,1}, C_{\Phi} \mathcal{E}_{m,1} \rangle_{\mathcal{A}_{\alpha}^2(H)} \\
                  &=d^2_{m\alpha}   \int_{\mathbb{D}} \langle ze_{1}, (\Phi(z))^m e_{1} \rangle \, dA_{\alpha}(z) \\
                         &=d^2_{m\alpha} \frac{1}{\pi}  \int_{0}^{1} \int_{0}^{2\pi}   \langle re^{i\theta} e_{1},(\sum_{n=1}^{\infty} a_{n}(re^{i\theta})^n)^m e_{1} \rangle \, (\alpha+1)(1-r^2)^{\alpha}\, r dr d\theta. 
\end{align*}
On simplifying this we get  
\begin{align} \label{adjoint of comp op is comp op eq 4} 
\langle \frac{1}{\sqrt{2+\alpha}} C_{\Psi} \mathcal{E}_{1,1}, d_{m\alpha} \mathcal{E}_{m,1} \rangle_{\mathcal{A}_{\alpha}^2(H)}=\overline{a_{1}} \, \delta_{m1}
\end{align}
Equations \eqref{adjoint of comp op is comp op eq 3} and \eqref{adjoint of comp op is comp op eq 4} together implies that
\begin{equation*}
b_{m}=\overline{a_{1}} \, \delta_{m1} \;\, \text{for} \, m\geq 1. 
\end{equation*}
This implies $\Psi(z)=\overline{a_{1}} \, z.$ Since $\Psi$ is a self-map on $\mathbb{D}$ therefore $|a_{1}| \leq 1.$ Further, combining the fact that $C_{\Phi}=C_{\Psi}^{*}$ with Lemma \ref{adjoint of composition when phi is alpha z} we get 
\begin{equation*}
\Phi(z)= \overline{\overline{a_{1}}} z= a_{1} z \,\,\text{with} \,\, |a_{1}| \leq 1.
\end{equation*}
Conversely, let $\Phi(z)=\lambda z$ with $|\lambda| \leq 1.$ Then, by Lemma \ref{adjoint of composition when phi is alpha z},  $C_{\Phi}^{*}=C_{\Psi}$ where $\Psi(z)=\bar{\lambda} z.$ So the adjoint of composition operator is again a composition operator.
\end{proof}
As a consequence of the above Theorem we have the following result:
\begin{corollary}
Let $\Phi$ be an analytic self-map on $\mathbb{D}.$ Then, $C_{\Phi}$ is Hermitian on $\mathcal{A}_{\alpha}^2(H)$ if and only if $\Phi(z)=\lambda z$ where $|\lambda| \leq 1$ and $\lambda$ is a real number.
\end{corollary}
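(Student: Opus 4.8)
The plan is to derive this characterization directly from Theorem \ref{necessary and sufficient condition for adjoint of comp op} together with Lemma \ref{adjoint of composition when phi is alpha z}, exploiting the fact that $C_{\Phi}$ being Hermitian means precisely $C_{\Phi}^{*}=C_{\Phi}$, which in particular forces the adjoint $C_{\Phi}^{*}$ to be a composition operator.

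For the forward direction I would begin by assuming $C_{\Phi}$ is Hermitian, so that $C_{\Phi}^{*}=C_{\Phi}$. Since $C_{\Phi}$ is itself a composition operator, its adjoint $C_{\Phi}^{*}$ is then also a composition operator, and Theorem \ref{necessary and sufficient condition for adjoint of comp op} immediately yields $\Phi(z)=\lambda z$ for some $\lambda$ with $|\lambda|\leq 1$. By Lemma \ref{adjoint of composition when phi is alpha z} the adjoint is $C_{\Phi}^{*}=C_{\Psi}$ with $\Psi(z)=\bar{\lambda}z$. The Hermitian hypothesis now reads $C_{\Phi}=C_{\Psi}$; evaluating both operators on the test function $f(z)=z\,e_{0}\in\mathcal{A}_{\alpha}^{2}(H)$ gives $\Phi(z)e_{0}=\Psi(z)e_{0}$ for every $z\in\mathbb{D}$, whence $\Phi=\Psi$ and therefore $\lambda=\bar{\lambda}$, i.e.\ $\lambda$ is real.

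For the converse I would assume $\Phi(z)=\lambda z$ with $|\lambda|\leq 1$ and $\lambda\in\mathbb{R}$. Then $\bar{\lambda}=\lambda$, and Lemma \ref{adjoint of composition when phi is alpha z} gives $C_{\Phi}^{*}=C_{\Psi}$ where $\Psi(z)=\bar{\lambda}z=\lambda z=\Phi(z)$. Consequently $C_{\Phi}^{*}=C_{\Phi}$, so $C_{\Phi}$ is Hermitian.

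The only step that goes beyond quoting the two earlier results is the passage from the operator identity $C_{\Phi}=C_{\Psi}$ back to the symbol identity $\Phi=\Psi$, and I expect this to be the mildest obstacle: it is settled by evaluating on a single explicit test function as above, since the coefficient $e_{0}$ is nonzero. Everything else is a direct combination of the established adjoint formula and the classification of self-maps whose composition operator has a composition-operator adjoint.
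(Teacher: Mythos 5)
Your proposal is correct and follows essentially the same route as the paper's own proof: both directions invoke Theorem \ref{necessary and sufficient condition for adjoint of comp op} to get $\Phi(z)=\lambda z$, use Lemma \ref{adjoint of composition when phi is alpha z} to identify $C_{\Phi}^{*}=C_{\Psi}$ with $\Psi(z)=\bar{\lambda}z$, and then test the identity $C_{\Phi}=C_{\Psi}$ on the function $g(z)=z\,e_{0}$ to conclude $\lambda=\bar{\lambda}$. No gaps; nothing to change.
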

\begin{proof}
Let $C_{\Phi}$ be Hermitian. Then, by Theorem \ref{necessary and sufficient condition for adjoint of comp op}  $\Phi(z)=\lambda z$ with $|\lambda| \leq 1.$ It follows by Lemma \ref{adjoint of composition when phi is alpha z}, that $C_{\Phi}^{*}=C_{\Psi}$ for $\Psi(z)=\overline{\lambda} z.$  Since $C_{\Phi}^{*}=C_{\Phi},$ therefore, $C_{\Psi}=C_{\Phi}.$ Let $g(z)=z e_{0} \in \mathcal{A}_{\alpha}^2(H),$ then 
$$(C_{\Phi}g)(z)=(C_{\Psi}g)(z) \Rightarrow g(\Phi(z))=g(\Psi(z)) \Rightarrow (\lambda z-\overline{\lambda}z)e_{0}=0 \,\, \text{for all} \, z \in \mathbb{D}.$$
Thus, $\lambda= \overline{\lambda},$ that is, $\lambda $ is a real number.\\
Conversely, let $\Phi(z)=\lambda z$ where $|\lambda| \leq 1$ and $\lambda$ is a real number and let $\Psi(z)=\overline{\lambda} z.$  Then, $\Psi(z)=\lambda z= \Phi(z)$ and by Lemma \ref{adjoint of composition when phi is alpha z} it follows that $C_{\Phi}^{*}=C_{\Phi}.$ Hence, $C_{\Phi}$ is Hermitian.
\end{proof}
The necessary and sufficient conditions for a composition operator to be normal operator on $\mathcal{A}_{\alpha}^2(H)$ are obtained in the following: 
\begin{theorem} \label{Comp op is normal iff condition}
Let $\Phi$ be analytic self-map on $\mathbb{D}.$ Then, $C_{\Phi}$ is normal on $\mathcal{A}_{\alpha}^2(H)$ if and only if $\Phi(z)=\lambda z$ with $|\lambda| \leq 1$ 
\end{theorem}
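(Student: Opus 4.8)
The plan is to dispose of the easy converse first and then extract the linear form of $\Phi$ from normality by testing against the reproducing kernels. For the converse, suppose $\Phi(z)=\lambda z$ with $|\lambda|\le 1$. By Lemma \ref{adjoint of composition when phi is alpha z}, $C_\Phi^{*}=C_\Psi$ with $\Psi(z)=\bar\lambda z$, and a direct computation of the two compositions gives $C_\Phi^{*}C_\Phi=C_\Psi C_\Phi=C_{\eta}$ and $C_\Phi C_\Phi^{*}=C_\Phi C_\Psi=C_{\eta}$, where $\eta(z)=(\Phi\circ\Psi)(z)=(\Psi\circ\Phi)(z)=|\lambda|^2 z$. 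Hence $C_\Phi^{*}C_\Phi=C_\Phi C_\Phi^{*}$ and $C_\Phi$ is normal.

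For the forward direction, I would use the defining property of a normal operator that $\|C_\Phi f\|=\|C_\Phi^{*} f\|$ for every $f\in\mathcal A_\alpha^2(H)$, and apply it to the kernels $f=K_z^{j}$. On one side, equation \eqref{adjoint of comp on kernel} together with \eqref{norm of reproducing kernel} gives $\|C_\Phi^{*} K_z^{j}\|^2=\|K_{\Phi(z)}^{j}\|^2=(1-|\Phi(z)|^2)^{-(2+\alpha)}$. On the other side, from $K_z^{j}(w)=e_j(1-w\bar z)^{-(2+\alpha)}$ (Lemma \ref{reproducing kernel defined}) one has $(C_\Phi K_z^{j})(w)=e_j(1-\Phi(w)\bar z)^{-(2+\alpha)}$, so that $\|C_\Phi K_z^{j}\|^2=\int_{\mathbb D}|1-\Phi(w)\bar z|^{-2(2+\alpha)}\,dA_\alpha(w)$. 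Normality thus yields the identity
\begin{equation*}
\int_{\mathbb D}\frac{dA_\alpha(w)}{|1-\Phi(w)\bar z|^{2(2+\alpha)}}=\frac{1}{(1-|\Phi(z)|^2)^{2+\alpha}}\qquad(z\in\mathbb D).
\end{equation*}
Setting $z=0$ immediately forces $\Phi(0)=0$, so I may write $\Phi(z)=\sum_{n\ge 1}a_n z^n$.

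The heart of the argument is then a comparison of Taylor coefficients in this identity. Expanding $(1-\Phi(w)\bar z)^{-(2+\alpha)}=\sum_m d_{m\alpha}^{2}\,\Phi(w)^m\bar z^m$ together with its conjugate, integrating term by term, and using the moment formula $\int_{\mathbb D}|w|^{2n}\,dA_\alpha(w)=d_{n\alpha}^{-2}$, the coefficient of $z\bar z$ on the left is $(2+\alpha)^2\sum_{n\ge 1}|a_n|^2 d_{n\alpha}^{-2}$, while on the right, since $|\Phi(z)|^2$ contributes $|a_1|^2$ to the $z\bar z$ coefficient, the coefficient of $z\bar z$ is $(2+\alpha)|a_1|^2$. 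Because $d_{1\alpha}^{2}=2+\alpha$, the $n=1$ term on the left already matches the right-hand coefficient, so equating the two forces $\sum_{n\ge 2}|a_n|^2 d_{n\alpha}^{-2}=0$, whence $a_n=0$ for all $n\ge 2$. Thus $\Phi(z)=a_1 z=:\lambda z$, and since $\Phi$ maps $\mathbb D$ into itself, $|\lambda|\le 1$. The main obstacle is the justification of the term-by-term expansion and integration of the left-hand integral and the legitimacy of matching a single mixed coefficient; this requires only that the relevant series converge locally uniformly for small $z$, after which the single identity in the $z\bar z$ coefficient delivers the whole conclusion.
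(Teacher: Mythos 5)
Your proposal is correct, and its converse direction coincides with the paper's: Lemma \ref{adjoint of composition when phi is alpha z} gives $C_\Phi^*=C_\Psi$, and both products send $f$ to $f(|\lambda|^2z)$. In the forward direction the two arguments share the same skeleton --- the criterion $\|C_\Phi f\|=\|C_\Phi^* f\|$ for all $f$, applied first to the kernel at the origin to force $\Phi(0)=0$ --- but differ in the second test function. The paper tests the single basis vector $\mathcal{E}_{1,1}(z)=\sqrt{2+\alpha}\,z\,e_1$, computing $\|C_\Phi^*\mathcal{E}_{1,1}\|^2=|a_1|^2$ by Parseval's identity in the basis $\{\mathcal{E}_{m,n}\}$ and $\|C_\Phi\mathcal{E}_{1,1}\|^2=\sum_{l\ge1}|a_l|^2\,l!\,\Gamma(3+\alpha)/\Gamma(l+2+\alpha)$ by the moment formula. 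You instead test the whole family $K_z^j$, convert normality into the integral identity $\int_{\mathbb D}|1-\Phi(w)\bar z|^{-2(2+\alpha)}\,dA_\alpha(w)=(1-|\Phi(z)|^2)^{-(2+\alpha)}$, and extract the $z\bar z$ coefficient. Since $l!\,\Gamma(3+\alpha)/\Gamma(l+2+\alpha)=(2+\alpha)d_{l\alpha}^{-2}$, both routes land on literally the same equation $|a_1|^2=(2+\alpha)\sum_{n\ge1}|a_n|^2d_{n\alpha}^{-2}$, whose $n=1$ term saturates the left side and kills all $a_n$, $n\ge2$. What the paper's choice buys is that every step is a finite orthogonality computation with no convergence issues; what yours buys is independence from the Parseval computation of $\langle\mathcal{E}_{1,1},\mathcal{E}_{m,n}\circ\Phi\rangle$ and a template that works whenever kernel norms are explicitly known. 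The justification you flag is genuinely routine: because $|\Phi(w)|<1$ and $d_{m\alpha}^2$ grows only polynomially, the double series $\sum_{m,k}d_{m\alpha}^2d_{k\alpha}^2\Phi(w)^m\overline{\Phi(w)}^k\bar z^m z^k$ is dominated, uniformly in $w$, by $\sum_{m,k}d_{m\alpha}^2d_{k\alpha}^2|z|^{m+k}<\infty$, so term-by-term integration is legitimate, and uniqueness of the mixed coefficients follows by writing $z=re^{i\theta}$, integrating against $e^{-i(k-m)\theta}$, and comparing powers of $r$.
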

\begin{proof}
Let $\Phi(z)=\sum_{l=0}^{\infty} a_{l} z^{l}$ where $a_{l} \in \mathbb{C}$ and let $C_{\Phi}$ be normal. Then, for $f \in \mathcal{A}_{\alpha}^2(H)$  
$$||C_{\Phi}^{*} f||_{\mathcal{A}_{\alpha}^2(H)}^2= \langle C_{\Phi}^{*} f, C_{\Phi}^{*} f \rangle_{\mathcal{A}_{\alpha}^2(H)}=\langle C_{\Phi} C_{\Phi}^{*} f, f \rangle_{\mathcal{A}_{\alpha}^2(H)}=\langle C_{\Phi}^{*} C_{\Phi} f, f \rangle_{\mathcal{A}_{\alpha}^2(H)}= ||C_{\Phi} f ||_{\mathcal{A}_{\alpha}^2(H)}^2. \; $$
Thus,
\begin{equation} \label{normal adjoint of comp and comp eq}
||C_{\Phi}^{*} f||_{\mathcal{A}_{\alpha}^2(H)}^2=||C_{\Phi} f ||_{\mathcal{A}_{\alpha}^2(H)}^2 \;\, \text{for all}\,\,f \in \mathcal{A}_{\alpha}^2(H).
\end{equation} 
Taking $f=K_{0}^{j},\; j\in \mathbb{N}_{0}$ in above equation we get
\begin{align*}
||C_{\Phi}^{*} K_{0}^{j} ||_{\mathcal{A}_{\alpha}^2(H)}^2 &=||C_{\Phi} K_{0}^{j} ||_{\mathcal{A}_{\alpha}^2(H)}^2 \\
\Rightarrow \frac{1}{(1-|\Phi(0)|^2)^{2+\alpha}} &=1.   
\end{align*}
Hence, $\Phi(0)=0$ implies $a_{0}=0.$ So, $\Phi(z)=\sum_{l=1}^{\infty} a_{l} z^{l}.$  Since $\{\mathcal{E}_{mn}: m,n \in \mathbb{N}_{0} \}$ forms an orthonormal basis for $\mathcal{A}_{\alpha}^2(H),$ we can write 
$$C_{\Phi}^{*} \mathcal{E}_{1,1}=\sum_{m,n=0}^{\infty} \langle C_{\Phi}^{*} \mathcal{E}_{1,1}, \mathcal{E}_{m,n} \rangle_{\mathcal{A}_{\alpha}^2(H)} \mathcal{E}_{m,n}. $$
Consider
\begin{align}
||C_{\Phi}^{*} \mathcal{E}_{1,1}||_{\mathcal{A}_{\alpha}^2(H)}^2 &=\sum_{m,n=0}^{\infty} |\langle C_{\Phi}^{*} \mathcal{E}_{11}, \mathcal{E}_{m,n} \rangle_{\mathcal{A}_{\alpha}^2(H)}|^2 \notag\\
                      &=\sum_{m,n=0}^{\infty} |\langle  \mathcal{E}_{1,1}, C_{\Phi} \mathcal{E}_{m,n} \rangle_{\mathcal{A}_{\alpha}^2(H)}|^{2} \notag\\
                      &=\sum_{m,n=0}^{\infty} |\langle  \mathcal{E}_{1,1},  \mathcal{E}_{m,n}\circ \Phi \rangle_{\mathcal{A}_{\alpha}^2(H)}|^{2}. \label{ normal eq 1}
\end{align}
For $m,n \in \mathbb{N}_{0}$
\begin{align}
\langle  \mathcal{E}_{1,1},  \mathcal{E}_{m,n}\circ \Phi \rangle_{\mathcal{A}_{\alpha}^2(H)} &= \sqrt{2+\alpha}\, d_{m\alpha} \int_{\mathbb{D}} \langle z e_{1}, (\Phi(z))^{m} e_{n} \rangle \, dA_{\alpha}(z) \notag\\
 &=\sqrt{2+\alpha}\, d_{m\alpha} \frac{1}{\pi}  \int_{0}^{1} \int_{0}^{2\pi}   \langle re^{i\theta} e_{1},(\sum_{l=1}^{\infty} a_{l}(re^{i\theta})^l)^m e_{n} \rangle \,(\alpha+1)(1-r^2)^{\alpha}\, r dr d\theta \notag\\
&=\begin{cases} 
 (2+\alpha)\,\langle e_{1}, e_{n} \rangle \overline{a_{1}}\, \int_{\mathbb{D}} |z|^2 \, dA_{\alpha}(z) & \text{if}\,\, m= 1 \\
 0 & \text{if}\,\, m\neq 1 
\end{cases}\notag \\
 &=\overline{a_{1}} \, \langle e_{1},e_{n}\rangle  \delta_{m,1}. \label{ normal eq 2} 
\end{align}
Combining equations \eqref{ normal eq 1} and \eqref{ normal eq 2} we get
\begin{align}
||C_{\Phi}^{*} \mathcal{E}_{1,1}||_{\mathcal{A}_{\alpha}^2(H)}^2 &=\sum_{m,n=0}^{\infty} |\overline{a_{1}} \, \langle e_{1},e_{n}\rangle  \delta_{m,1}|^2 \notag\\
         &=\sum_{n=0}^{\infty} |\overline{a_{1}} \, \langle e_{1},e_{n}\rangle |^2 \notag\\
         &= |a_{1}|^{2}. \label{ normal eq 3} 
\end{align}
From equation \eqref{normal adjoint of comp and comp eq} we get 
 \begin{align}
||C_{\Phi}^{*} \mathcal{E}_{1,1}||_{\mathcal{A}_{\alpha}^2(H)}^2 &=||C_{\Phi} \mathcal{E}_{1,1}||_{\mathcal{A}_{\alpha}^2(H)}^2 \notag\\
 &=(\sqrt{2+\alpha})^2\int_{\mathbb{D}}   \langle \Phi(z)e_{1},\Phi(z)e_{1} \rangle \, dA_{\alpha}(z) \notag\\                
 &=(2+\alpha) \sum_{l=1}^{\infty} \int_{\mathbb{D}}  |a_{l}|^2 |z|^{2l} \, dA_{\alpha}(z) \notag\\ 
 &=(2+\alpha) \sum_{l=1}^{\infty} |a_{l}|^2 \frac{l! \, \Gamma(2+\alpha)}{\Gamma(l+2+\alpha)} \notag \\
 &= \sum_{l=1}^{\infty} |a_{l}|^2 \frac{l! \, \Gamma(3+\alpha)}{\Gamma(l+2+\alpha)}. \label{ normal eq 4} 
 \end{align}
Equating \eqref{ normal eq 3} and \eqref{ normal eq 4}  we get that $a_{l}=0$ for all $l \geq 2$ and hence $\Phi(z)=a_{1} z.$ Since $\Phi$ is a self-map on $\mathbb{D},$ therefore, $|a_{1}| \leq 1$.\\
Conversely, let $\Phi(z)=\lambda z$ with $|\lambda| \leq 1.$ Then, by Lemma \ref{adjoint of composition when phi is alpha z}  $C_{\Phi}^{*}=C_{\Psi}$ for $\Psi(z)=\bar{\lambda} z.$ For all $f\in\mathcal{A}_{\alpha}^2(H), z \in \mathbb{D}$  
$$(C_{\Phi}C_{\Phi}^{*}f)(z)=(C_{\Phi}C_{\Psi}f)(z)=f(|\lambda|^{2}z)$$
and
$$(C_{\Phi}^{*}C_{\Phi}f)(z)=(C_{\Psi}C_{\Phi}f)(z)=f(|\lambda|^{2}z).$$
Therefore, $C_{\Phi}$ is normal on $\mathcal{A}_{\alpha}^2(H)$.
\end{proof}

\section{Boundedness of Generalized Weighted Composition Operator}

Let $\Phi$ and $\Psi$ be analytic function on $\mathbb{D}$ such that $\Phi$ is a self-map on $\mathbb{D}.$ For $r\in \mathbb{N}_{0},$ the generalized weighted composition operator (\cite{Zhu Generalized weighted 1}, \cite{Zhu Generalized weighted 2}) on $\mathcal{A}_{\alpha}^2(H)$ denoted by $D_{\Phi,\Psi}^{r}$ be defined as
$$D_{\Phi,\Psi}^{r}f=\Psi \cdot (f^{(r)}  \circ \Phi) \;\; \text{for all} \; f \in \mathcal{A}_{\alpha}^2(H),$$
where $D$ is the differentiation operator and $D^{r}f=f^{(r)}$ for f in $\mathcal{A}_{\alpha}^2(H)$. For $r=0,$ $D_{\Phi,\Psi}^{r}$ is the weighted composition operator $C_{\Psi,\Phi}.$

By $L^2$ we denote the space of all complex valued measurable functions on $\mathbb{D}$ such that 
$$||h||_2^{2}=\int_{\mathbb{D}} |h(z)|^2 dA_{\alpha}(z) < \infty .$$

We all know that differentiation operator is unbounded. In 2017, Kumar and Abbas \cite{Pawan and Zaheer} obtained complete characterization of operator $D_{\Phi,\Psi}^{r}$ for $r=1$ to be bounded on weighted Hardy spaces. In this section we will discuss the boundedness of $D_{\Phi,\Psi}^{r}$ for $r \geq 1$ on $\mathcal{A}_{\alpha}^2(H).$ The motivation of this section comes from \cite{Pawan and Zaheer}.

\begin{theorem} \label{boundedness of generalized weighted comp op}
Let $\Phi$ and $\Psi$ be analytic function on $\mathbb{D}$ such that $\Phi$ is a self-map on $\mathbb{D}$ and $\{\Phi^{n}\}_{n \in \mathbb{N}_{0}}$ be orthogonal. Then, for $r \geq 1$ the generalized weighted composition operator $D_{\Phi,\Psi}^{r}$ is bounded on $\mathcal{A}_{\alpha}^2(H)$ if and only if there exist some number $K$ such that
$$||\Psi \cdot \Phi^{m-r}||_2 \leq \frac{K}{d_{m\alpha} \, m(m-1)(m-2) \ldots (m-(r-1))} \,\, \text{for all}\,\, m\geq r$$ 
\end{theorem}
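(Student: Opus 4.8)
The plan is to reduce the entire statement to understanding how $D_{\Phi,\Psi}^r$ acts on the orthonormal basis $\{\mathcal{E}_{m,n}\}$ from Proposition~\ref{orthonormal basis for vector val weig Bergman sp}. First I would differentiate: since $\mathcal{E}_{m,n}(z)=d_{m\alpha}z^m e_n$, for $m\ge r$ we have $\mathcal{E}_{m,n}^{(r)}(z)=d_{m\alpha}\,m(m-1)\cdots(m-(r-1))\,z^{m-r}e_n$, while $\mathcal{E}_{m,n}^{(r)}\equiv 0$ for $m<r$. Hence
\[
D_{\Phi,\Psi}^r\mathcal{E}_{m,n}=d_{m\alpha}\,m(m-1)\cdots(m-(r-1))\,\Psi\cdot\Phi^{\,m-r}\,e_n,\qquad m\ge r.
\]
Because $\|h\,e_n\|_{\mathcal{A}_\alpha^2(H)}^2=\int_{\mathbb{D}}|h|^2\,dA_\alpha=\|h\|_2^2$ for any scalar analytic $h$ and any basis vector $e_n$, this yields the single crucial identity
\[
\|D_{\Phi,\Psi}^r\mathcal{E}_{m,n}\|_{\mathcal{A}_\alpha^2(H)}=d_{m\alpha}\,m(m-1)\cdots(m-(r-1))\,\|\Psi\cdot\Phi^{\,m-r}\|_2 .
\]
The stated inequality is then precisely the assertion that $\|D_{\Phi,\Psi}^r\mathcal{E}_{m,n}\|\le K$ holds uniformly in $m\ge r$ (and in $n$, on which this norm does not depend). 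So the theorem says that $D_{\Phi,\Psi}^r$ is bounded if and only if its values on the orthonormal basis are uniformly bounded.

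Necessity is then immediate: if $D_{\Phi,\Psi}^r$ is bounded, then since $\|\mathcal{E}_{m,n}\|=1$ we have $\|D_{\Phi,\Psi}^r\mathcal{E}_{m,n}\|\le\|D_{\Phi,\Psi}^r\|$, and taking $K=\|D_{\Phi,\Psi}^r\|$ reproduces the displayed bound after dividing by $d_{m\alpha}\,m(m-1)\cdots(m-(r-1))$.

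For sufficiency I would expand an arbitrary $f=\sum_{m,n}c_{m,n}\mathcal{E}_{m,n}$ with $\|f\|^2=\sum_{m,n}|c_{m,n}|^2$, apply the operator termwise, and invoke the orthogonality hypothesis. The point is that this hypothesis forces the images $\{D_{\Phi,\Psi}^r\mathcal{E}_{m,n}\}$ to be pairwise orthogonal: for fixed $n$ their inner products are scalar multiples of $\int_{\mathbb{D}}|\Psi|^2\Phi^{\,m-r}\overline{\Phi^{\,m'-r}}\,dA_\alpha$, which vanish for $m\ne m'$ by orthogonality of the $\Phi$-powers, while distinct $n$ produce orthogonal $e_n$-components. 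Granting this, a Parseval-type computation gives
\[
\|D_{\Phi,\Psi}^rf\|^2=\sum_{m,n}|c_{m,n}|^2\,\|D_{\Phi,\Psi}^r\mathcal{E}_{m,n}\|^2\le K^2\sum_{m,n}|c_{m,n}|^2=K^2\|f\|^2,
\]
so $D_{\Phi,\Psi}^r$ is bounded with $\|D_{\Phi,\Psi}^r\|\le K$.

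Two places need genuine care. First, the orthogonality of the images is exactly where the hypothesis on $\{\Phi^n\}$ is consumed, and one must check that it delivers $\int_{\mathbb{D}}|\Psi|^2\Phi^{\,j}\overline{\Phi^{\,k}}\,dA_\alpha=0$ for $j\ne k$ (equivalently, orthogonality of $\{\Psi\Phi^n\}$ in $L^2$); I expect this to be the main obstacle and the step most sensitive to how ``orthogonal'' is read. Second, the termwise application of $D_{\Phi,\Psi}^r$ to the infinite series must be justified: I would first establish the bound on finite linear combinations of the $\mathcal{E}_{m,n}$, extend by density, and finally confirm that the bounded extension agrees with the formula $\Psi\cdot(f^{(r)}\circ\Phi)$ by using Theorem~\ref{fz norm less than f intosome norm} to upgrade norm convergence to uniform convergence on compact subsets (hence convergence of the $r$-th derivatives and of their compositions with $\Phi$), thereby identifying the limit pointwise.
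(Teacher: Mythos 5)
Your proposal follows essentially the same route as the paper's proof: necessity by applying the operator to the unit-norm basis functions $\mathcal{E}_{m,0}$, and sufficiency by expanding $f$, applying the operator termwise, and using pairwise orthogonality of the image terms to sum the squared norms and bound each one by the hypothesis. The one step you flag as delicate --- that orthogonality of $\{\Phi^{n}\}$ must yield orthogonality of $\{\Psi\cdot\Phi^{n}\}$ in $L^2$, which is what the Parseval-type identity actually consumes --- is handled no better in the paper, whose proof simply asserts this implication without justification, so your version is, if anything, the more careful of the two.
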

\begin{proof}
Recall that for $m\in  \mathbb{N}_{0},$  $\mathcal{E}_{m,0}(z)=d_{m\alpha} z^{m} e_{0} \in \mathcal{A}_{\alpha}^2(H).$ For $m \geq r$
$$\mathcal{E}_{m,0}^{(r)}=d_{m\alpha} m(m-1)(m-2) \ldots (m-(r-1))z^{m-r} e_{0}$$
Let $D_{\Phi,\Psi}^{r}$ be bounded for $r\geq 1.$ Then, there exists a number $K$ which satisfy 
$$|| D_{\Phi,\Psi}^{r}f ||_{\mathcal{A}_{\alpha}^2(H)} \leq K ||f||_{\mathcal{A}_{\alpha}^2(H)} \;\; \text{for all} \; f \in \mathcal{A}_{\alpha}^2(H).$$
From this it follows that   
$$|| D_{\Phi,\Psi}^{r} \,\mathcal{E}_{m,0} ||_{\mathcal{A}_{\alpha}^2(H)} \leq K ||\mathcal{E}_{m,0}||_{\mathcal{A}_{\alpha}^2(H)} $$
Since $||\mathcal{E}_{m,0}||=1,$ therefore, for $m \geq r$
$$|| m(m-1)(m-2) \ldots (m-(r-1)) d_{m\alpha} \Psi \cdot (e_{0}\Phi^{m-r})||_{\mathcal{A}_{\alpha}^2(H)} \leq K $$
Since $||e_{0}||=1,$ above reduces to 
$$||\Psi \cdot \Phi^{m-r}||_{2} \leq \frac {K}{d_{m\alpha} \, m(m-1)(m-2) \ldots (m-(r-1))} \,\,\; \text{for all}\,\, m\geq r$$
Conversely, suppose 
$$||\Psi \cdot \Phi^{m-r}||_{2} \leq \frac{K}{d_{m\alpha} \, m(m-1)(m-2) \ldots (m-(r-1))} \,\, \text{for all}\,\, m\geq r$$ 
Let $f(z) =\sum_{m=0}^{\infty} y_{m} z^{m} \in \mathcal{A}_{\alpha}^2(H)$ where $y_{m}\in H.$ Then 
$$f^{(r)}(z)=\sum_{m=r}^{\infty} m(m-1)(m-2) \ldots (m-(r-1))\, y_{r}\, z^{m-r} $$
Now 
$$D_{\Phi,\Psi}^{r} f=\sum_{m=r}^{\infty} m(m-1)(m-2) \ldots (m-(r-1))\, y_{m}\, ( \Psi \cdot \Phi^{m-r}). $$
Since $\{\Phi^{n}\}_{n \in \mathbb{N}_{0}}$ is orthogonal it follows that 
\begin{align*}
&||D_{\Phi,\Psi}^{r} f||_{\mathcal{A}_{\alpha}^2(H)}^{2}\\ 
&=||\sum_{m=r}^{\infty} m(m-1)(m-2) \ldots (m-(r-1))\, y_{m}\, ( \Psi \cdot \Phi^{m-r}) ||_{\mathcal{A}_{\alpha}^2(H)}^{2} \\
   &=\sum_{m=r}^{\infty} m^{2}(m-1)^{2}(m-2)^{2} \ldots (m-(r-1))^{2}\, ||y_{m}||^{2}\, || \Psi \cdot \Phi^{m-r} ||_{2}^{2} \\
   & \leq \sum_{m=r}^{\infty} m^{2}(m-1)^{2}(m-2)^{2} \ldots (m-(r-1))^{2}\, ||y_{m}||^{2}\,\frac{K^{2}}{d_{m\alpha}^{2} \, m^{2}(m-1)^{2}(m-2)^{2} \ldots (m-(r-1))^{2}} \\
   &\leq \sum_{m=0}^{\infty} \frac{||y_{m}||^{2} }{d_{m\alpha}^{2}} K^{2}  
\end{align*} 
Thus,  $||D_{\Phi,\Psi}^{r} f||_{\mathcal{A}_{\alpha}^2(H)}  \leq K||f||_{\mathcal{A}_{\alpha}^2(H)}$ for all $f \in \mathcal{A}_{\alpha}^2(H).$ Hence $D_{\Phi,\Psi}^{r}$ is a bounded operator for $r \geq 1$.
\end{proof}
If $\Psi \equiv 1,$ then the generalized weighted composition operator $D_{\Phi,\Psi}^{r}=C_{\Phi}D^{r}$ and   
Theorem \ref{boundedness of generalized weighted comp op} reduces to the following:
\begin{corollary}
Let $\Phi$ be analytic function on $\mathbb{D}$ such that $\Phi$ is a self-map on $\mathbb{D}$ and $\{\Phi^{n}\}_{n \in \mathbb{N}_{0}}$ is orthogonal. Then, for $r \geq 1$ the operator $C_{\Phi}D^{r}$ is bounded on $\mathcal{A}_{\alpha}^2(H)$ if and only if there exist some number $K$ such that
$$|| \Phi^{m-r}||_{2} \leq \frac{K}{d_{m\alpha} \, m(m-1)(m-2) \ldots (m-(r-1))} \,\,\, \text{for all}\,\, m\geq r.$$ 
\end{corollary}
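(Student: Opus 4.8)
The plan is to obtain this corollary as the immediate specialization of Theorem \ref{boundedness of generalized weighted comp op} to the case $\Psi \equiv 1$, so the work consists entirely of checking that both the operator and the norm criterion degenerate correctly under this substitution; there is no genuine obstacle, as the statement is a direct corollary.

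First I would verify that with $\Psi \equiv 1$ the generalized weighted composition operator coincides with $C_{\Phi}D^{r}$. Indeed, for every $f \in \mathcal{A}_{\alpha}^2(H)$ the defining formula gives $D_{\Phi,\Psi}^{r} f = \Psi \cdot (f^{(r)} \circ \Phi) = f^{(r)} \circ \Phi = (C_{\Phi}D^{r}) f$, using $D^{r}f = f^{(r)}$. Hence the assertions ``$D_{\Phi,\Psi}^{r}$ is bounded'' and ``$C_{\Phi}D^{r}$ is bounded'' are literally the same. I would also note that the two standing hypotheses of Theorem \ref{boundedness of generalized weighted comp op}, namely that $\Phi$ is an analytic self-map of $\mathbb{D}$ and that $\{\Phi^{n}\}_{n\in\mathbb{N}_{0}}$ is orthogonal, are precisely the hypotheses imposed in the corollary, so nothing needs to be re-established.

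Finally I would simplify the norm criterion supplied by the theorem. Theorem \ref{boundedness of generalized weighted comp op} asserts that boundedness is equivalent to the existence of some $K$ with
$$\|\Psi \cdot \Phi^{m-r}\|_{2} \leq \frac{K}{d_{m\alpha}\, m(m-1)(m-2)\cdots(m-(r-1))}\quad\text{for all } m\geq r.$$
Substituting $\Psi \equiv 1$, the left-hand side becomes $\|\Phi^{m-r}\|_{2}$, which is exactly the quantity appearing in the stated conclusion. The only point requiring any care is confirming that the $L^2$-norm $\|\Phi^{m-r}\|_{2}$ in the corollary is identical to the quantity $\|\Psi \cdot \Phi^{m-r}\|_{2}$ of the theorem after the substitution, which is immediate since multiplication by the constant function $1$ leaves the integrand $|\Phi^{m-r}(z)|^{2}$ unchanged. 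This completes the reduction and hence proves both directions of the equivalence.
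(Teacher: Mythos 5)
Your proposal is correct and is exactly how the paper obtains this corollary: the paper states that for $\Psi \equiv 1$ one has $D_{\Phi,\Psi}^{r}=C_{\Phi}D^{r}$ and then reads off the criterion of Theorem \ref{boundedness of generalized weighted comp op} with $\|\Psi\cdot\Phi^{m-r}\|_{2}$ replaced by $\|\Phi^{m-r}\|_{2}$. Nothing further is needed.
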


For $r=1$ and $\Psi=\Phi'$ derivative of $\Phi$  
$$ D_{\Phi,\Psi}^{r}f=\Phi' \cdot(f'\circ \Phi)  = DC_{\Phi}f, $$
 for $f \in \mathcal{A}_{\alpha}^2(H)$ and Theorem \ref{boundedness of generalized weighted comp op} reduces to the following:
\begin{corollary}
Let $\Phi$ be analytic function on $\mathbb{D}$ such that $\Phi$ is a self-map on $\mathbb{D}$ and $\{\Phi^{n}\}_{n \in \mathbb{N}_{0}}$ is orthogonal. Then, $DC_{\Phi}$ is bounded on $\mathcal{A}_{\alpha}^2(H)$ if and only if there exists some number $K$ such that
$$||\Phi' \cdot \Phi^{m-1}||_{2} \leq \frac{K}{d_{m\alpha} \, m} \,\,\, \text{for all}\,\, m\geq 1.$$ 
\end{corollary}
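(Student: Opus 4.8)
The plan is to obtain this corollary as a direct specialization of Theorem \ref{boundedness of generalized weighted comp op}, so the whole argument reduces to identifying $DC_{\Phi}$ with the generalized weighted composition operator $D_{\Phi,\Psi}^{r}$ for a suitable choice of parameters. First I would record the chain-rule identity: for $f \in \mathcal{A}_{\alpha}^2(H)$,
$$(DC_{\Phi}f)(z) = (f\circ\Phi)'(z) = \Phi'(z)\cdot f'(\Phi(z)) = \big(\Phi' \cdot (f'\circ\Phi)\big)(z),$$
which is precisely $D_{\Phi,\Psi}^{r}f$ with $r=1$ and $\Psi=\Phi'$. Since $\Phi$ is analytic on $\mathbb{D}$, its derivative $\Psi=\Phi'$ is again analytic on $\mathbb{D}$, so $D_{\Phi,\Phi'}^{1}$ is a legitimate instance of the operator treated in Theorem \ref{boundedness of generalized weighted comp op}, and the standing hypothesis that $\{\Phi^{n}\}_{n\in\mathbb{N}_{0}}$ is orthogonal is inherited unchanged.

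With this identification in hand, I would simply invoke Theorem \ref{boundedness of generalized weighted comp op} with $r=1$ and $\Psi=\Phi'$. The falling-factorial weight $m(m-1)(m-2)\cdots(m-(r-1))$ in the theorem collapses, for $r=1$, to the single factor $m$ (the product consists of the one term $m$, since $m-(r-1)=m$), and $\Phi^{m-r}$ becomes $\Phi^{m-1}$. Hence the boundedness criterion
$$\|\Psi\cdot\Phi^{m-r}\|_{2}\leq\frac{K}{d_{m\alpha}\,m(m-1)\cdots(m-(r-1))}\quad\text{for all }m\geq r$$
specializes verbatim to
$$\|\Phi'\cdot\Phi^{m-1}\|_{2}\leq\frac{K}{d_{m\alpha}\,m}\quad\text{for all }m\geq 1,$$
which is exactly the stated condition, in both the forward (boundedness implies the estimate) and reverse (the estimate implies boundedness) directions provided by the theorem.

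The only point demanding a moment's care is confirming that, for $r=1$, the weight $m(m-1)\cdots(m-(r-1))$ equals $m$ rather than $m(m-1)$; this follows directly from reading the falling factorial in the theorem as terminating at the factor $m-(r-1)=m$. Beyond correctly matching the chain-rule form of $DC_{\Phi}$ to the definition of $D_{\Phi,\Psi}^{r}$ and verifying that $\Phi'$ meets the analyticity requirement of the theorem, there is no genuine obstacle; the corollary is a transcription of the parent theorem at the distinguished parameter values.
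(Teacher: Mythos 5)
Your proposal is correct and takes essentially the same route as the paper: the paper likewise observes that for $r=1$ and $\Psi=\Phi'$ the chain rule gives $D_{\Phi,\Psi}^{r}f=\Phi'\cdot(f'\circ\Phi)=DC_{\Phi}f$, and then reads off the corollary as the verbatim specialization of Theorem \ref{boundedness of generalized weighted comp op}, with the falling-factorial weight collapsing to $m$ and $\Phi^{m-r}$ becoming $\Phi^{m-1}$, exactly as you describe.
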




\section*{Declarations}

\begin{itemize}
\item \textbf{Funding}\\
Not applicable
\item \textbf{Competing interests}\\
Authors declare no conflicts of interest in this paper.
\end{itemize}

\textbf{Anuradha Gupta}\\
 Department of Mathematics, Delhi College of Arts and Commerce,\\
  University of Delhi, Netaji Nagar, \\
  New Delhi-110023, India.\\
  \vspace{0.2cm}
 email: dishna2@yahoo.in\\
   \textbf{Geeta Yadav} (Corresponding author)\\
  Department of Mathematics,\\
   University of Delhi, New Delhi-110007, India.\\
  email: ageetayadav@gmail.com
\end{document}